\newtheorem{theorem}{Theorem}[section] 
\newtheorem{corollary}[theorem]{Corollary}
\newtheorem{proposition}[theorem]{Proposition}
\newtheorem{definition}{Definition}
\newtheorem{remark}[theorem]{Remark}
\newtheorem{example}{Example}
\newcommand{\N} {\mathbb N}
\newcommand{\R} {\mathbb R}
\newcommand{\Q} {\mathbb Q}
\newcommand{\Z} {\mathbb Z}
\newcommand{\Prob}{\mathbb P}
\newcommand{\E}{\mathbb E}
\subjclass[2000]{37C40, 37E45 (primary), 37E10 (secondary), 37A05}
\begin{document}

\title[Rotation numbers for random dynamics on the circle]
 {A family of rotation numbers for discrete random dynamics on the circle}

\author{Christian S. Rodrigues}
\address{Christian S.~Rodrigues\\
   Max-Planck-Institute for Mathematics in the Sciences\\
   Inselstr. 22\\
   04103 Leipzig\\
   Germany}
   \email{christian.rodrigues@mis.mpg.de}

\author{Paulo R. C. Ruffino}
\address{Paulo R. C.~Ruffino\\
   Departamento de Matem\'{a}tica, Universidade Estadual de Campinas\\
   13.083-859 Campinas - SP\\
   Brazil}
   \email{ruffino@ime.unicamp.br}

\date{\today}

\begin{abstract}
 
  We revisit the problem of well-defining rotation numbers for
  discrete random dynamical systems on $S^1$. We show that,
  contrasting with deterministic systems, the topological (i.e. based
  on Poincar\'{e} lifts) approach does depend on the choice of lifts
  (e.g. continuously for nonatomic randomness). Furthermore, the
  winding orbit rotation number does not agree with the topological
  rotation number. Existence and conversion formulae between these
  distinct numbers are presented. Finally, we prove a sampling in time
  theorem which recovers the rotation number of continuous Stratonovich
  stochastic dynamical systems on $S^1$ out of its time discretisation
  of the flow.

\end{abstract}

\maketitle

\section{Introduction}

Rotation number is one of the most fundamental quantities
characterising the behaviour of dynamics on the circle. Since its
introduction by Poincar\'{e} more than a century ago, it has played
essential role on the understanding of the iterated deterministic
dynamics on $S^{1}$: from the existence of periodic orbits to proving
linear conjugacies. Its importance stems from the fact that it is an
invariant under conjugacy, allowing for classification of possible
asymptotic dynamics; we refer to Katok and Hasselblatt \cite{Katok and
  Hasselblatt}, and references therein for the classical
definitions. Among many other basic properties, the rotation number of
deterministic dynamics equals the average speed of orbits winding
around the circle $\pmod{ 2 \pi}$. Moreover, it is intrinsic in the sense
that it is independent: of the choice of the lift, of the initial
condition for the lifts, and of the orbit chosen to count the
winding. These well known properties are indispensable in order to
obtain simultaneous linear conjugacy of commuting diffeomorphisms to
pure rotations \cite{Fayad-Khanin}, to establish that the group of
orientation preserving homeomorphisms of the circle is a simple group
where each of its element is generated by a product of (at most three)
involutions \cite{Gill and O'Farrel}, or, as it has been recently
accomplished, to investigate general rigidity conditions of critical
maps \cite{GuM13}, just to cite a few instances.

Contrasting with deterministic dynamics, for random dynamics on the
circle many of these properties actually do not hold. Rational
rotation number, for example, does not imply the existence of periodic
orbit. Neither does irrationality of the rotation number (and
regularity) imply dense orbit. One may think of simple examples of
random systems with irrational rotation number and finite orbits:
consider for instance a random dynamics generated by i.i.d. random
mappings in $\{I, R_{\theta} \}$ where $I$ is the identity and
$R_{\theta}$ is a pure rotation by an angle $2\pi \theta$ with
rational $\theta$. Suppose that $R_{\theta}$ has probability $p\in
[0,1] \setminus \Q$ and $I$ has probability $1-p$. Normalising the
angles by $2 \pi$, any reasonable definition of rotation number leads,
in this case, to an irrational averaging rotation by $\theta p \pmod{
  2 \pi} $, with finite orbits.

We are going to show that for random dynamics of homeomorphism
preserving orientation on the circle, the topological concept based on
Poincar\'{e} lifts for each random mapping and the orbit winding
rotation concept even lead to different numbers. Therefore, they
measure distinct features of the dynamics. Yet, there are several
reasons why one is interested in such numbers. In this paper, we study
these different concepts, proving existence, investigating ergodic
properties, and the relation among them. We introduce a convenient
parametrisation of the topological rotation number, denoted here by
$\rho_{q, \alpha}$, and the orbit winding rotation number, starting at
$s_{0} \in S^{1}$, denoted here by $OR_{s_0}$. The topological
rotation number depends precisely on a choice of such parameters $(q,
\alpha)$, which encodes the dependence on the lifts for each random
homeomorphism.

Amongst many natural motivations to deal with rotation numbers for
discrete random dynamical systems on the circle, consider, for
example, random homeomorphisms on $S^1$ induced by the action of
random $2\times 2$-matrices. The rotation number for this random
system on $S^1$ is the angular counterpart of the Lyapunov exponents
for the product of these random matrices. In this sense, for a product
of random (ergodically generated) matrices, the rotation number in
invariant 2-subspaces represents the imaginary part of a generalised
eigenvalues, whose real part is represented by the Lyapunov exponents,
described in the multiplicative ergodic theorem (see e.g. Arnold
\cite{Arnold} and references therein).

Another motivation comes from discretisation in time of continuous
systems. In general, one would like to recover the original rotation
number based on observations of the system at discrete time, sampled
at intervals of length $\Delta t$. For deterministic systems, the
original rotation number can be recovered after renormalisation, if
the interval $\Delta t$ is smaller than $\frac{1}{2 \rho}$, where
$\rho$ is the rotation number (frequency). This is the classical
Nyquist frequency sampling theorem, see e.g. Higgins \cite{Higgins},
Oppenheim and Schafer \cite{Oppenheim and Schafer}.  We show here that
for appropriate parameters $(q,\alpha)$, a sampling theorem for
stochastic systems holds, as a limit when $\Delta t$ goes to zero, if
one considers the topological rotation number $\rho_{q, \alpha}$.

Several authors, from different perspectives, have addressed random
dynamics on the circle. Just to mention a few, Ruffino has proved a
sampling theorem for linear random systems on $S^1$
\cite{Ruffino_ROSE}. In \cite{Zmarrou and Homburg-2007, Zmarrou and
  Homburg-2008} Zmarrou and Homburg studied the influence of bounded
noise on bifurcation of diffeomorphism, while Li and Lu \cite{Li and
  Lu} have proved the continuity of the rotation number with respect
to $L^1$-norm on the lifts. Numerical simulations have also been
performed in McSharry and Ruffino \cite{McSharry and Ruffino}. Our
contribution here on clarifying the distinction among different
rotation numbers is not only to prevent ambiguities, but also to show
how rich the angular asymptotic behaviour is on random systems on
$S^1$.
The paper is organised as follows. In the next Section we introduce
the random dynamics based on a probability space $(\Omega,
\mathcal{F}, \Prob)$. Under \textit{uniform conditions} on the random
lifts we introduce the definition of the topological rotation number
$\rho_{q, \alpha}$. We prove an ergodic result of existence
$\Prob$-a.s. for $\rho_{q, \alpha}$. In Section 3, we introduce the
orbital rotation number. An ergodic result on existence also holds in
this case. Here, remarkably, it appears the dependence on the initial
condition, \textit{i.e.} the dependence on the ergodic invariant
measure chosen in the domain of the skew product on $\Omega \times
S^1$ (or just on $S^1$ if the system is \textit{i.i.d.}). Formulae
which compare each one of these distinct rotation numbers are
presented. Finally, in Section 4, we prove a sampling in time theorem
to recover the rotation number of continuous Stratonovich stochastic
dynamical systems on $S^1$ out of its time discretisation of the flow.


\section{Topological rotation numbers $\rho_{q, \alpha}$}

Let $(\Omega, \mathcal{F}, \Prob)$ be a probability space and $f:
\Omega \times S^{1} \rightarrow S^{1}$ be a random variable in
$\mathcal{H}^+$, the space of homeomorphisms of $S^{1}$ which preserve
the orientation. Let also $\theta: \Omega \rightarrow \Omega$ be an
ergodic transformation with respect to $\Prob$. We consider the
dynamics given by the cocycle generated by the composition of the
sequence of random homeomorphisms in the following sense (see
e.g. Arnold \cite{Arnold}),
\[
f^n(\omega, s_0)= f(\theta^{n-1}\omega, \cdot) \circ \cdots \circ
f(\theta \omega, \cdot) \circ f(\omega, s_0), \mbox{ for } n\in \N.
\]

\subsection*{Lifts of random homeomorphims.} We shall denote 
the covering map of $S^1$ by
\begin{displaymath}
\begin{split}
p : & \ \R \longrightarrow S^{1},\\
&\ x\longmapsto e^{i 2 \pi x}.
\end{split}
\end{displaymath}
A lift of an orientation preserving homeomorphism $f: S^1 \rightarrow
S^1$ is an increasing continuous function on the covering space $F :
\R \rightarrow \R$ which is semiconjugate to $f$ by $p$,
\textit{i.e.}, $p \circ F = f\circ p$.  Given a lift $F$ of $f$, then
$F+k$ is also a lift for any $k \in \Z$. For a fixed lift $F$, one can
write $F(x)= Id(x) + \delta(x)$, where $\delta: \R \rightarrow \R$, is
a periodic function. It measures the deviation or the distance of a
point $s$ to its image $f(s)$ along one of the many possible chosen
geodesics (counting periodic geodesics).  This choice of angle
(geodesic) is related to the fact that the lifts are not unique. For
any lift $F$, the associated deviation function $\delta$ has bounded
amplitude:
\[
 \max_{x\in \R}\{\delta (x)\}-\min_{x\in \R
} \{ \delta(x)\}<1.
\]
Moreover, the deviation function $\delta \circ p^{-1}: S^1 \rightarrow
\R$ is well defined by periodicity of $\delta$. For a random
homeomorphism $f(\omega, s)$ in $S^1$, given a random lift $F(\omega,
x)$, the corresponding random deviation as a function on $S^1$ will be
denoted, by abuse of notation, by $\delta\circ p^{-1}$, in the sense
that, $\delta\circ p^{-1}(\omega, s)=\delta (\omega, p^{-1}(s))$.


Next, we establish a criterion of \textit{uniformness} on the random lifts:
\begin{definition} \label{Def: alpha_lifts} 
 Given an orientation
    preserving homeomorphism $f$ and $q, \alpha \in \R$, we denote by
    $F_{q, \alpha}: \R \rightarrow \R $ the unique lift of $f$ such
    that $F_{q, \alpha} (q)\in [\alpha, \alpha +1)$. We call this
    function the $(q,\alpha)$-lift of $f$.
\end{definition}
\noindent We shall refer to such choices of parameters fulfilling Definition
\ref{Def: alpha_lifts} as \textit{uniform choice of lifts}. 
The following elementary properties of the deviation periodic function
$\delta$ are direct consequences of Definition \ref{Def: alpha_lifts}.
\medskip
\begin{proposition}[Basic properties] \label{Prop: propriedades da delta }
For all $q, \alpha \in \R$ and $f \in \mathcal{H}^+$, write the 
corresponding lift $F_{q, \alpha}= Id + \delta_{q, 
\alpha}$. Then the deviation function $\delta_{q, 
\alpha}$ satisfies:
\begin{enumerate}
 \item {\em Boundedness}: For every $x \in \R$,
 \begin{equation} 
\label{Prop: propriedade limitacao do delta}
(\alpha- q)-1 < \delta_{q, \alpha}(x)< (\alpha-q) + 2;
\end{equation}
\item {\em  Periodicity}: for all $k,l \in
\Z$,
\begin{equation} \label{Prop: propriedade periodicidade do delta}
\delta_{q + k, \alpha + l} = 
\delta_{q,
  \alpha} + l - k.
  \end{equation}
\end{enumerate}
\end{proposition}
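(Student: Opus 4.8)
The plan is to deduce both properties directly from the single normalising condition $F_{q, \alpha}(q)\in[\alpha, \alpha+1)$ of Definition~\ref{Def: alpha_lifts}, together with two facts about lifts of orientation preserving circle homeomorphisms already recorded above: that the deviation $\delta = F - \mathrm{Id}$ of any lift is $1$-periodic (equivalently $F(x+1) = F(x) + 1$, whence $F(x+k) = F(x) + k$ for every $k\in\Z$), and that it obeys the amplitude bound $\max_{x}\delta(x) - \min_{x}\delta(x) < 1$.

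For the boundedness statement \eqref{Prop: propriedade limitacao do delta}, I would first record what the defining inclusion says at the point $q$ itself: since $\delta_{q,\alpha}(q) = F_{q,\alpha}(q) - q$, the condition $F_{q,\alpha}(q)\in[\alpha,\alpha+1)$ is exactly $(\alpha - q)\le \delta_{q,\alpha}(q) < (\alpha - q) + 1$. For an arbitrary $x\in\R$ I then feed this into the amplitude bound. The upper estimate follows from $\delta_{q,\alpha}(x) - \delta_{q,\alpha}(q)\le \max\delta_{q,\alpha} - \min\delta_{q,\alpha} < 1$, which gives $\delta_{q,\alpha}(x) < \delta_{q,\alpha}(q) + 1 < (\alpha - q) + 2$; the lower estimate follows from $\delta_{q,\alpha}(x) - \delta_{q,\alpha}(q)\ge \min\delta_{q,\alpha} - \max\delta_{q,\alpha} > -1$, which gives $\delta_{q,\alpha}(x) > \delta_{q,\alpha}(q) - 1\ge (\alpha - q) - 1$. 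The only delicate point is combining a weak inequality with a strict one so that both outer bounds stay strict, which does work out as stated.

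For the periodicity statement \eqref{Prop: propriedade periodicidade do delta}, I would guess the corresponding identity one level up, at the lifts themselves, namely $F_{q+k, \alpha+l} = F_{q, \alpha} + (l - k)$, and then verify it using the uniqueness clause of Definition~\ref{Def: alpha_lifts}. Because $l - k\in\Z$, the function $F_{q,\alpha} + (l-k)$ is again a lift of the same homeomorphism $f$, so it suffices to check its normalisation at the point $q+k$: using $F_{q,\alpha}(q+k) = F_{q,\alpha}(q) + k$ one computes $\bigl(F_{q,\alpha} + (l-k)\bigr)(q+k) = F_{q,\alpha}(q) + l\in[\alpha + l, (\alpha + l) + 1)$, which is precisely the defining property of $F_{q+k, \alpha+l}$. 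Uniqueness then forces $F_{q+k,\alpha+l} = F_{q,\alpha} + (l-k)$, and subtracting $\mathrm{Id}$ from both sides gives $\delta_{q+k,\alpha+l} = \delta_{q,\alpha} + l - k$.

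I do not expect either part to present a genuine obstacle; the proof is essentially bookkeeping. The two things to be careful about are invoking $F(x+1) = F(x) + 1$ exactly at the step where the shift by $k$ enters the periodicity argument, and tracking strict versus weak inequalities in the boundedness argument so that the open intervals in \eqref{Prop: propriedade limitacao do delta} emerge with the correct (strict) endpoints.
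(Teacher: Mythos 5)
Your proof is correct and follows essentially the same route as the paper: item (1) is obtained exactly as there, by adding the normalisation inequality $\alpha - q \le \delta_{q,\alpha}(q) < (\alpha-q)+1$ to the amplitude bound $-1 < \delta_{q,\alpha}(x) - \delta_{q,\alpha}(q) < 1$. The paper dismisses item (2) as obvious, and your uniqueness argument via $F_{q,\alpha}(q+k)=F_{q,\alpha}(q)+k$ is the natural way to fill in that detail.
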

\begin{proof}
  For item (1); note that, directly from the definition of $F_{q,
    \alpha}$, we have that $\alpha -q \leq \delta_{q, \alpha}(q)<
  (\alpha -q)+ 1$. Adding this inequality to the bounded amplitude $-1
  < \delta_{q, \alpha}(x)- \delta_{q, \alpha}(q) < 1$, the result
  follows. Item (2) is obvious.
\end{proof}

We are particularly interested in the dependence of the rotation
number with respect to the parameters $(q, \alpha)$. In contrast to
the deterministic dynamics, in random systems, the rotation number
does depend on $(q, \alpha)$. In order to understand this dependency
and to obtain comparison formulae relating the family of rotation
numbers, it is convenient at this point to consider that the rotation
numbers live in the real line $\R$ rather than in $\R / \Z$.  We
introduce the following definition:

\begin{definition} \label{Def: RN rho_q_alpha} For a choice of
  parameters $q,\alpha \in \R$, the $(q, \alpha)$-rotation number of
  the random dynamical system generated by a cocycle $(f, \theta)$ is
  the random variable defined by the asymptotic limit
\[
 \rho_{q,\alpha}=\lim_{n\rightarrow \infty} \frac{F^{n}(\omega, x) - x}{n},
\]
when the limit exists, where $F^{n}(\omega, x)= 
F_{q,\alpha}(\theta^{n-1 }\omega, \cdot) \circ
F_{q,\alpha}(\theta^{n-2}\omega , \cdot) \circ \cdots \circ  
F_{q,\alpha}(\omega,
x)$.
\end{definition}
\noindent The almost surely existence of $\rho_{q,\alpha}$ is proved
in Theorem \ref{Thm: existencia RN_alpha}. Moreover, if
$\rho_{q,\alpha}$ exists for a certain $\omega \in \Omega$, it is
independent of the starting point $x\in \R$.

\begin{proposition} \label{Prop: inv RN pela CI} If the rotation
  number $\rho_{q, \alpha}$ of Definition~\ref{Def: RN rho_q_alpha}
  exists for a fixed $\omega \in \Omega$ and an initial $x\in \R$,
  then it is independent of $x\in \R$.
\end{proposition}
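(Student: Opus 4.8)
The plan is to base everything on a single structural property of lifts. First I would observe that the defining relation $p\circ F_{q,\alpha}(\omega,\cdot)=f(\omega,\cdot)\circ p$ together with $p(x+1)=p(x)$ forces $F_{q,\alpha}(\omega,x+1)=F_{q,\alpha}(\omega,x)+1$ for all $x\in\R$, and that each $(q,\alpha)$-lift is increasing. Both properties are preserved under composition, so for every fixed $\omega$ and $n$ the iterate $x\mapsto F^{n}(\omega,x)$ is an increasing homeomorphism of $\R$ satisfying $F^{n}(\omega,x+k)=F^{n}(\omega,x)+k$ for all $k\in\Z$. It is worth stressing that this holds even though $F^{n}(\omega,\cdot)$ is assembled from the \emph{distinct} homeomorphisms $F_{q,\alpha}(\theta^{j}\omega,\cdot)$, $0\le j\le n-1$; no ergodicity and no further hypothesis on the cocycle enters.

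Next I would compare the difference quotients at two initial points. Fix $\omega$ and $x,y\in\R$, and set $m=\lfloor x-y\rfloor\in\Z$, so that $y+m\le x\le y+m+1$. Applying the increasing map $F^{n}(\omega,\cdot)$ and using the translation identity,
\[
m\;\le\;F^{n}(\omega,x)-F^{n}(\omega,y)\;\le\;m+1 .
\]
Since also $m\le x-y<m+1$, subtracting yields $\bigl|(F^{n}(\omega,x)-x)-(F^{n}(\omega,y)-y)\bigr|\le 1$ for every $n$, and therefore
\[
\left|\frac{F^{n}(\omega,x)-x}{n}-\frac{F^{n}(\omega,y)-y}{n}\right|\;\le\;\frac1n .
\]
Letting $n\to\infty$, the right-hand side vanishes, so the two sequences have the same asymptotic behaviour: if one converges, so does the other, to the same value. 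Since $y\in\R$ was arbitrary, the limit $\rho_{q,\alpha}$, when it exists for some $x$, is the same for all $x$, which proves the proposition.

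The argument carries no real obstacle; the only point that genuinely needs attention is the remark above, namely that monotonicity and commutation with integer translations survive the composition of the \emph{different} fibre maps --- this is exactly what licenses the squeezing estimate $m\le F^{n}(\omega,x)-F^{n}(\omega,y)\le m+1$. Everything else is the classical one-line estimate for lifts of circle homeomorphisms, applied verbatim to $F^{n}(\omega,\cdot)$.
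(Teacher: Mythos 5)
Your proof is correct and follows essentially the same route as the paper: both rest on the observation that $F^{n}(\omega,\cdot)$ is itself a lift of the composed circle homeomorphism, so its deviation $F^{n}(\omega,\cdot)-\mathrm{Id}$ has amplitude less than $1$, which gives $\bigl|(F^{n}(\omega,x)-x)-(F^{n}(\omega,y)-y)\bigr|\le 1$ and hence equality of the limits after dividing by $n$. The only difference is cosmetic: you re-derive the bounded-amplitude property from monotonicity and commutation with integer translations, whereas the paper simply invokes it.
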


\begin{proof}
  For every positive integer $n$, we have that $F^{n}(\omega, \cdot)$
  defined as before is again a lift of the composition $f(\theta^{n-1 }\omega, 
\cdot) \circ \cdots \circ  
f(\omega,
\cdot)$
  (although not necessarily with $F^{n}(\omega, q) \in [\alpha,
  \alpha+ 1) $). Hence, its
  deviation $(F^n - Id)$ has bounded amplitude
\[
\max_{x\in \mathbb{R}}\{(F^n-Id)(x)\}-\min_{x\in \mathbb{R}
}\{(F^n-Id)(x)\}<1\ .
\]
Then, for all $x, y \in \R$,
\[
|F^{n}(\omega, x)-F^{n}(\omega, y)|\leq |(F^n (\omega, x)-x)-(F^{n}(\omega, y)-y)|+|x-y|\leq |x-y|+1,
\]
therefore
\[
\lim_{n\rightarrow \infty }\left( \frac{F^{n}(\omega, x)}{n}-\frac{F^{n}(\omega, y)}{n}\right) 
=0.
\]

\end{proof}

Let $\Theta: \Omega \times S^1 \rightarrow \Omega \times S^1$ be the
skew product of the dynamics $(f, \theta)$ given by $\Theta(\omega, s)
= (\theta (\omega), f (\omega, s))$. An invariant probability measure
$\mu$ on $\Omega \times S^1$ factorise uniquely (a.s.) as $\mu ( d
\omega, ds)= \nu_{\omega}(ds)\, \Prob (d \omega)$, where
$\nu_{\omega}$ are random probability measures on $S^1$, see
\textit{e.g.} Arnold \cite[Sec. 1.4 and 1.5]{Arnold} and references
therein.  In particular, if the sequence of random homeomorphisms $(f
(\theta^{n}, \cdot))_{n\geq 0}$ is \textit{i.i.d} with respect to an
appropriate probability space, then $\mu = \nu(ds) \Prob (d \omega)$,
where $\nu(ds)$ is the stationary measure on $S^1$ for the Markov
process induced by $(f (\theta^{n}, \cdot))_{n\geq 0}$ a. s., see
\cite[Sec. 1.4.7]{Arnold}.


\begin{theorem}[Existence of $(q, \alpha)$-rotation numbers] 
\label{Thm: existencia RN_alpha} Let $\mu= \nu_{\omega}(ds) \Prob (d \omega)$ 
be 
an invariant probability 
measure on $\Omega \times S^1$ for the skew product $\Theta(\omega, s)$ 
associated to the cocycle $(f, \theta)$. Given parameters $q, \alpha\in \R$, 
the rotation number $\rho_{q, \alpha}$ exists and 
satisfies:
\begin{equation}
\label{eq.ergodic-formula}
 \rho_{q, \alpha} =  \E \int_{S^1}   \ [ \delta_{q, \alpha}(\omega,  
p^{-1}(s))] 
\ d \nu_{\omega} 
(s)  \ \ \ \ \ \ \ \Prob\mbox{-a.s.}.  
\end{equation}
\end{theorem}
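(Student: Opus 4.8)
The plan is to rewrite the lifted displacement $F^{n}(\omega,x)-x$ as a Birkhoff sum over the skew product $\Theta$ and then invoke the pointwise ergodic theorem. First I would use $F_{q,\alpha}=Id+\delta_{q,\alpha}$ and the fact that $F^{n}(\omega,\cdot)=F_{q,\alpha}(\theta^{n-1}\omega,\cdot)\circ\cdots\circ F_{q,\alpha}(\omega,\cdot)$ is obtained by successively composing the maps $F_{q,\alpha}(\theta^{k}\omega,\cdot)$; telescoping the increments along the orbit $x_{k}:=F^{k}(\omega,x)$ gives
\begin{equation*}
F^{n}(\omega,x)-x=\sum_{k=0}^{n-1}\delta_{q,\alpha}\big(\theta^{k}\omega,F^{k}(\omega,x)\big).
\end{equation*}
Since $F^{k}(\omega,\cdot)$ is a lift of $f^{k}(\omega,\cdot)$ we have $p\big(F^{k}(\omega,x)\big)=f^{k}(\omega,p(x))$, and $1$-periodicity of $\delta_{q,\alpha}(\omega,\cdot)$ makes the $k$-th summand depend on $F^{k}(\omega,x)$ only through this projection. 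Introducing the observable $g(\omega,s):=\delta_{q,\alpha}(\omega,p^{-1}(s))$ on $\Omega\times S^{1}$ (well defined by periodicity) and writing $s_{0}=p(x)$, and using $\Theta^{k}(\omega,s_{0})=(\theta^{k}\omega,f^{k}(\omega,s_{0}))$, the identity becomes
\begin{equation*}
F^{n}(\omega,x)-x=\sum_{k=0}^{n-1}g\big(\Theta^{k}(\omega,s_{0})\big).
\end{equation*}

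By the boundedness estimate \eqref{Prop: propriedade limitacao do delta}, $g$ is bounded (by a constant depending only on $q,\alpha$), so $g\in L^{1}(\mu)$. Birkhoff's ergodic theorem for the $\mu$-preserving transformation $\Theta$ then yields that $\tfrac{1}{n}\sum_{k=0}^{n-1}g\circ\Theta^{k}$ converges $\mu$-a.s.\ to $\E_{\mu}[g\mid\mathcal{I}_{\Theta}]$, the conditional expectation with respect to the $\sigma$-algebra of $\Theta$-invariant sets. Since $\mu(d\omega,ds)=\nu_{\omega}(ds)\,\Prob(d\omega)$, Fubini's theorem shows that for $\Prob$-a.e.\ $\omega$ this convergence holds for $\nu_{\omega}$-a.e.\ $s$, in particular for at least one $s\in S^{1}$; by \propref{Prop: inv RN pela CI} the limit $\rho_{q,\alpha}(\omega)$ then exists for \emph{every} $x\in\R$ and does not depend on $x$. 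This settles the almost sure existence.

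To identify the value I would note that, being base-point independent, $\rho_{q,\alpha}$ is a function of $\omega$ alone; feeding $y=F_{q,\alpha}(\omega,x)$ into the cocycle relation $F^{n+1}(\omega,x)=F^{n}\big(\theta\omega,F_{q,\alpha}(\omega,x)\big)$ and letting $n\to\infty$ (using again the independence of the initial lift point) gives $\rho_{q,\alpha}\circ\theta=\rho_{q,\alpha}$ $\Prob$-a.s., whence ergodicity of $\theta$ forces $\rho_{q,\alpha}$ to equal a constant $c$ $\Prob$-a.s. Integrating the Birkhoff convergence against $\mu$ — legitimate by dominated convergence since $|g|$ is bounded — and using $\Theta$-invariance of $\mu$ gives $c=\int_{\Omega\times S^{1}}g\,d\mu=\E\int_{S^{1}}\delta_{q,\alpha}(\omega,p^{-1}(s))\,d\nu_{\omega}(s)$, which is precisely \eqref{eq.ergodic-formula}.

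The one genuinely delicate point, and the only non-mechanical step, is the passage from the $\mu$-a.e.\ convergence supplied by the ergodic theorem (note $\Theta$ need not be ergodic even though $\theta$ is) to the assertion that $\rho_{q,\alpha}(\omega)$ is defined for $\Prob$-a.e.\ $\omega$ and \emph{every} lift point $x$; this is exactly what \propref{Prop: inv RN pela CI} provides. The remaining ingredients — the telescoping identity, the reduction to an observable on $\Omega\times S^{1}$ via periodicity, and the joint measurability of $(\omega,x)\mapsto\delta_{q,\alpha}(\omega,x)$ needed for Fubini and Birkhoff — are routine.
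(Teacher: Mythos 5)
Your proposal is correct and follows essentially the same route as the paper: the telescoping identity $F^{n}(\omega,x)-x=\sum_{k=0}^{n-1}\delta_{q,\alpha}\circ p^{-1}(\Theta^{k}(\omega,p(x)))$, Birkhoff's theorem for the skew product, and Proposition~\ref{Prop: inv RN pela CI} plus ergodicity of $\theta$ to upgrade $\mu$-a.s.\ convergence to a $\Prob$-a.s.\ constant. The only (cosmetic) difference is that you handle possible non-ergodicity of $\mu$ via the conditional expectation on $\Theta$-invariant sets and $\theta$-invariance of the limit, whereas the paper passes to ergodic components and then argues their rotation numbers coincide; both devices accomplish the same step.
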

\begin{proof} 
  For a fixed pair of parameters $(q, \alpha)$, we write the deviation
  function $\delta (\omega, x):= \delta_{q, \alpha}(\omega,x)$ and
  $\delta \circ p^{-1} (\omega, s):= \delta_{q, \alpha} (\omega,
  p^{-1} (s))$, for the sake of notation. From Definition \ref{Def: RN
    rho_q_alpha} and induction on $n$ we have that

\begin{displaymath}
\begin{split}
  F^{n} (\omega, x) = x &+\delta (\omega ,x) + \delta \Bigl(\theta
  \omega , x+\delta (\omega
  ,x)\Bigr) \\
  &+ \delta \Bigl( \theta^{2} \omega , x+\delta (\omega ,x)+\delta
  \Bigl(\theta\omega ,x+\delta (\omega ,x)\Bigr)\Bigr) + \cdots\\
\vspace{2cm}&+\delta \Bigl(\theta^{n-1}\omega ,x+\delta (\omega 
,x)+\delta (\theta \omega
,x+\delta (\omega ,x))+\cdots +\delta (\theta^{n-2}\omega, \cdots )\Bigr)\ .
\end{split}
\end{displaymath}
For every positive integer $i$, one has that $F^i(\omega, \cdot)$ is a
lift of $f(\theta^{i-1} (\omega), \cdot) \circ \cdots \circ f(\omega,
\cdot)$, \textit{i.e.},
\begin{eqnarray*}
p \Bigl( x+\delta (\omega ,x)+\cdots +\delta (\theta^{i-1}(\omega), \ldots 
)\Bigr) &= & f(\theta^{i-1} (\omega), 
\cdot) \circ \cdots \circ f(\omega, \cdot) \circ p(x).
\end{eqnarray*}
Additionally, for every deviation function and every $z\in \R$, we
have, by periodicity that $\delta(\cdot, z) = \delta\circ
p^{-1}(\cdot, p(z)) $. Thus, the terms in the expression of
$F^n(\omega, x)$ above can be written as
\begin{eqnarray*}
\delta \Bigl(\theta^{i-1}(\omega), x+\delta (\omega ,x)+\cdots 
+\delta (\theta^{i-2}(\omega), \cdots 
)\Bigr) && \\
 && \\
&\hspace{-10cm} =  &  \hspace{-5cm} \delta \circ p^{-1} 
\Bigl(\theta^{i-1}(\omega), f(\theta^{i-1} 
(\omega), 
\cdot) \circ \cdots \circ f(\omega, \cdot) \circ p(x) \Bigr) \\
 & \hspace{-10cm} = & \hspace{-5cm} \delta \circ p^{-1} \Bigl( 
\Theta^{i-1}(\omega, p(x)) \Bigr).
\end{eqnarray*}
Therefore,
\[
F^n(\omega, x) =x+\sum_{i=1}^{n}\delta \circ p^{-1} \Bigl( \Theta ^{i-1}(\omega 
,p(x)) \Bigr).
\]

We may suppose $\mu$ is an ergodic invariant probability measure for
the skew product on $\Omega \times S^1$. We obviously have that
$\delta \circ p^{-1} \in L^1 (\mu)$.  Then, by the Birkhoff's ergodic
theorem, one has that
\[
\rho_{q, \alpha} = \lim_{n\rightarrow \infty }\
\frac{F^n(\omega,x) - x}{n} = {\rm I\!E}\left[ \ \int_{S^{1}}\delta \circ
p^{-1} (\omega ,s)\ d\nu
_{\omega }(s)\ \right] \ \ \ \ \ \mu \mbox{-a.s.}.
\]
Proposition \ref{Prop: inv RN pela CI} tells us that the rotation
number $\rho_{q, \alpha}$ above is independent of the initial point in
$S^{1}$. Hence, the subset of $\Omega \times S^1$ where the equality
above holds extends to the whole vertical fibre, i.e., it is a
Cartesian product $\Omega'\times S^1$, for some $\Omega'\subset
\Omega$. Since $(\theta, \Prob)$ is ergodic, $\Omega'$ has full
$\Prob$-probability measure. Therefore, if there exist more than one
ergodic component in the ergodic decomposition of invariant measures,
the extension to the whole vertical fibre stated by Proposition
\ref{Prop: inv RN pela CI} implies that the rotation numbers $
\rho_{q, \alpha}$ corresponding to each ergodic component must
coincide.

\end{proof}

\noindent Note that the boundedness of the deviations $\delta_{q,
  \alpha}$ for the $(q, \alpha)$-lifts stated by inequality
(\ref{Prop: propriedade limitacao do delta}) implies that $ (\alpha
-q) - 1 < \rho_{q, \alpha} < (\alpha-q) + 2$, and inequality
(\ref{Prop: propriedade periodicidade do delta}) implies that
$\rho_{q+ k, \alpha+ l}=\rho_{q, \alpha} + (l-k)$.

\bigskip

In contrast to the uniform choice of lifts as established in Definition 
\ref{Def: alpha_lifts}, consider a nonuniform choice of random lifts 
$F(\omega)$. Then, 
for fixed parameters $(q,\alpha)$, we have  $F(\omega) = F_{q,\alpha} + 
N(\omega)$, where $N(\omega)$ is an integer random variable.

\begin{corollary} \label{Cor: existence with alternative lifts} If we
  consider a nonuniform choice of random lift given by $F(\omega) =
  F_{q,\alpha} + N(\omega)$, where $N(\omega)$ is an integrable
  integer random variable, then the associated rotation number for
  this lift exists and it is given by

\[
\rho_{F(\omega)}=  \lim_{n\rightarrow \infty} \frac{F^n (x) - x}{n} = 
\rho_{q,\alpha} + \E[ N].
\]
\end{corollary}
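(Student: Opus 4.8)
The plan is to reduce everything to \theoref{Thm: existencia RN_alpha} together with Birkhoff's ergodic theorem, by showing that the iterated nonuniform lift and the iterated $(q,\alpha)$-lift differ exactly by a Birkhoff sum of the integer random variable $N$.

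First I would record the elementary commutation identity that every lift $G$ of an orientation preserving homeomorphism satisfies $G(y+k)=G(y)+k$ for all $k\in\Z$; this is immediate from $p\circ G = f\circ p$ together with the fact that $G-Id$ is periodic of period $1$. In particular it applies to each $(q,\alpha)$-lift $F_{q,\alpha}(\omega,\cdot)$. Using $F(\omega,\cdot)=F_{q,\alpha}(\omega,\cdot)+N(\omega)$ and this identity, I would then prove by induction on $n$ that
\[
F^{n}(\omega,x)=F^{n}_{q,\alpha}(\omega,x)+\sum_{i=0}^{n-1}N(\theta^{i}\omega),
\]
where $F^{n}$ is the $n$-fold composition of the nonuniform lifts along the orbit of $\omega$ and $F^{n}_{q,\alpha}$ is as in \defref{Def: RN rho_q_alpha}. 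In the inductive step one has $F^{n+1}(\omega,x)=F_{q,\alpha}\bigl(\theta^{n}\omega,\,F^{n}(\omega,x)\bigr)+N(\theta^{n}\omega)$; since the partial sum $\sum_{i=0}^{n-1}N(\theta^{i}\omega)$ is an integer, the commutation identity lets us pull it outside $F_{q,\alpha}(\theta^{n}\omega,\cdot)$, which closes the induction.

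Dividing the displayed identity by $n$ and letting $n\to\infty$, the term $\bigl(F^{n}_{q,\alpha}(\omega,x)-x\bigr)/n$ converges $\Prob$-a.s. to $\rho_{q,\alpha}$ by \theoref{Thm: existencia RN_alpha}, while $\frac1n\sum_{i=0}^{n-1}N(\theta^{i}\omega)$ converges $\Prob$-a.s. to $\E[N]$ by Birkhoff's ergodic theorem applied to the integrable function $N$ under the ergodic transformation $\theta$. On the intersection of these two full-measure sets the limit $\rho_{F(\omega)}$ exists and equals $\rho_{q,\alpha}+\E[N]$, as claimed. The only point that needs minor care is the bookkeeping in the induction, namely checking that the integer carried at stage $n$ is precisely the partial Birkhoff sum; given the commutation identity this is routine, and there is no substantive obstacle.
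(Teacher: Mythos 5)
Your proof is correct and follows essentially the same route as the paper: the paper simply observes that the deviation of the nonuniform lift is $\delta_{q,\alpha}+N(\omega)$ and reads off the result from the ergodic formula (\ref{eq.ergodic-formula}), which is exactly your decomposition $F^{n}(\omega,x)=F^{n}_{q,\alpha}(\omega,x)+\sum_{i=0}^{n-1}N(\theta^{i}\omega)$ with Birkhoff's theorem applied to $N$. You merely make explicit (via the commutation identity and the induction) what the paper's one-line argument leaves implicit.
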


\begin{proof}
  The proof of Theorem \ref{Thm: existencia RN_alpha} holds for any
  random lift $F(\omega)$.  Given $F(\omega) = F_{q,\alpha} +
  N(\omega)$, with $N(\omega)$ integrable, we have that the deviation
  function for this lift is $\delta= \delta_{q,\alpha}(\omega, x) +
  N(\omega)$. Hence the statement follows directly from the ergodic
  formula (\ref{eq.ergodic-formula}) of Theorem \ref{Thm: existencia
    RN_alpha}.

\end{proof}

Last Corollary says, in particular, that if the choice of the lifts
are nonuniform, the associated ``rotation number'' can be any real
number, without any well defined dynamical meaning. Here we point out
a criticism on the approach of \cite{Li and Lu}, where the lifts are
arbitrary (integrable), as in the statement of the Corollary \ref{Cor:
  existence with alternative lifts}. In fact, this Corollary
together with Proposition~\ref{Prop: compara RN topologicos} ahead say
that there is no canonical choice for the parameters $(q, \alpha)$,
hence there is no canonical choice of rotation number. In
Section~\ref{sec.samplingThm}, (see Remark~\ref{remark-end}) we are
going to show that the lifts close to the identity in $\R$, precisely,
with $q-1<\alpha <q$, are somehow the right choices.

\medskip

We can now obtain a probabilistic formula of comparison between
distinct rotation numbers $\rho_{q, \alpha}$ and $\rho_{q', \alpha'}$
in terms of their parameters. Before doing that, it is convenient to
set the following notation.
For the sets of parameters $(q, \alpha)$ and $(q', \alpha')$, and $i,j
\in \Z$, consider
\begin{displaymath}
\begin{split}
  A_{i} :=& \{ \omega \in \Omega : F_{q, \alpha'}(\omega, q') \in [\alpha'+ i, \alpha'+ i + 1)\}, \text{ and} \\
  B_{j} :=& \{ \omega \in \Omega : F_{q, \alpha}(\omega, q) \in [\alpha'- j, \alpha'-j + 1)\}.
\end{split}
\end{displaymath}
For an interpretation of the disjoint sets $A_i$, we represent in
Figure~\ref{fig1a} a fragment of the graphic of $F_{q, \alpha'}(x)$ for two
distinct hypothetical $\omega_1$ and $\omega_2$ for $q'$, say, in a
neighbourhood of $[q, q+ 1]$. For $q'$ in other subintervals, the
interpretation is just an extension of what happens in a neighbourhood
of $[q, q+1]$. For $b-1<q'<a$ we have that $\omega_1, \omega_2 \in
A_0$. For $a<q'<b$ we have that $\omega_1 \in A_1$, and $\omega_2 \in
A_0$. And for $b<q'<a+1$ we have that $\omega_1, \omega_2 \in
A_1$. The picture also shows that the $A_i$'s are all empty, except
when $i\in \{\lfloor q'- q\rfloor, \lfloor q'- q\rfloor + 1
\}$. Observe that if $q'=q+n$, for an integer $n$ then $A_n= \Omega$
is the only nonempty set in the family $(A_i)_{i\in \Z}$.
Analogously, for an interpretation of the sets $B_j$, we represent in
Figure~\ref{fig1b} a fragment of the graphic of $F_{q, \alpha}$ for two
distinct $\omega_1$ and $\omega_2$ in a neighbourhood of $q$.  We are
interested on the dependence of the sets $B_j$ on the parameter
$\alpha'$, running in the vertical axes.  For $d-1<\alpha'<c$ we have
that $\omega_1, \omega_2 \in B_0$. For $c<\alpha'<d$ we have that
$\omega_1 \in B_1$, $\omega_2 \in B_{0}$.  And for $d<\alpha'<c+1$ we
have that $\omega_1, \omega_2 \in B_1$. From the picture, one also
sees that the $B_j$ are all empty, except when $j\in \{\lfloor
\alpha'- \alpha\rfloor, \lfloor \alpha'- \alpha\rfloor + 1
\}$. Observe that if $\alpha'=\alpha+n$, for an integer $n$ then $B_n=
\Omega$ is the only nonempty set in the family $(B_j)_{j\in \Z}$.

 Let us define
\begin{displaymath}
\begin{split}
  k =& \min \left\{ i \in \Z, \mbox{ such that } \Prob \left( A_ {i} \right)> 0 
\right\},  \text{ and} \\
l =& \min \left\{ j \in \Z, \mbox{ such that } \Prob \left( B_{j} \right)> 0 
\right\}.
\end{split}
\end{displaymath}

\begin{figure}[t]
\label{Fig1}
    \subfigure[]{\label{fig1a}\includegraphics[width=.55\columnwidth]{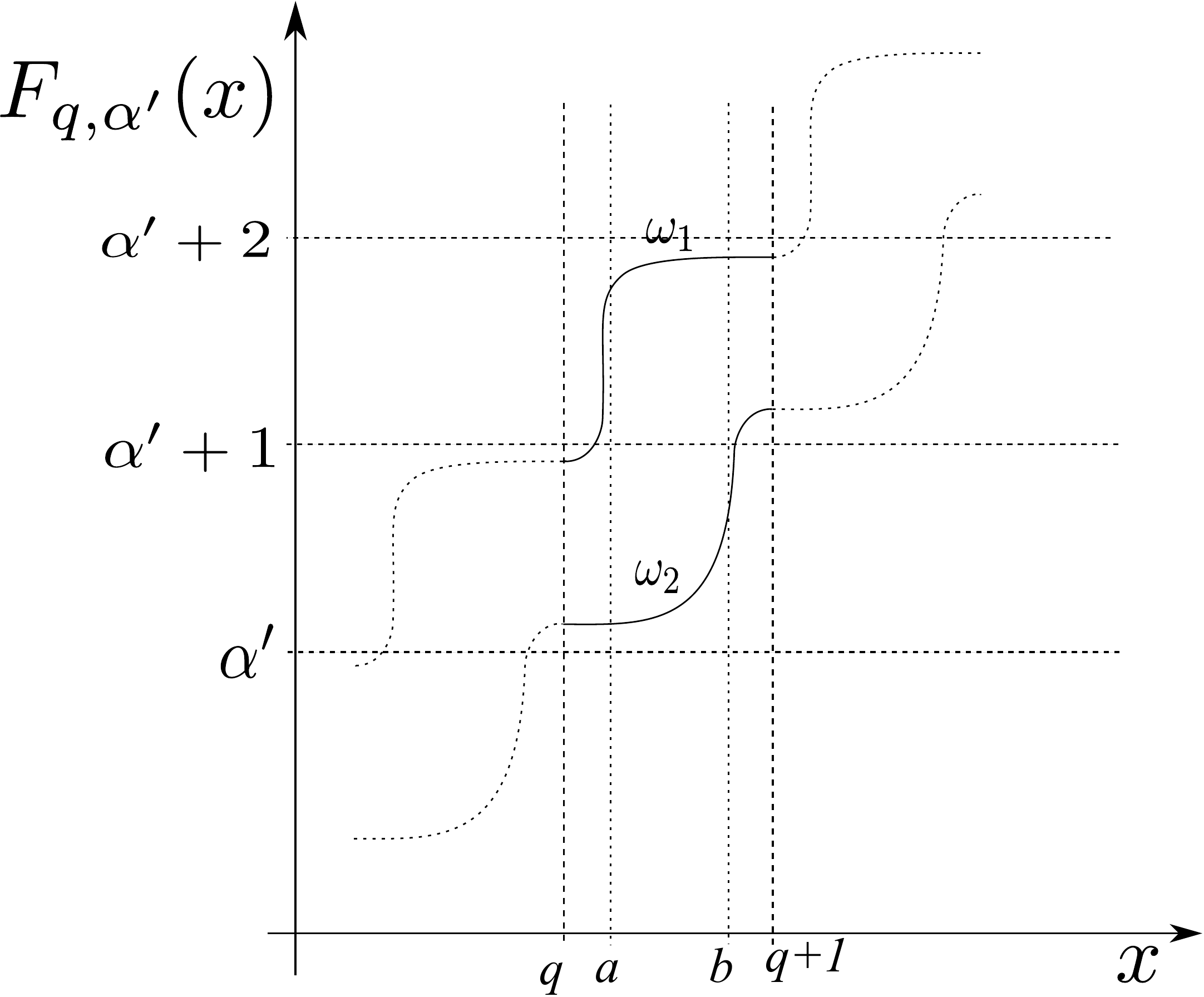}}
    \subfigure[]{\label{fig1b}\includegraphics[width=.4\columnwidth]{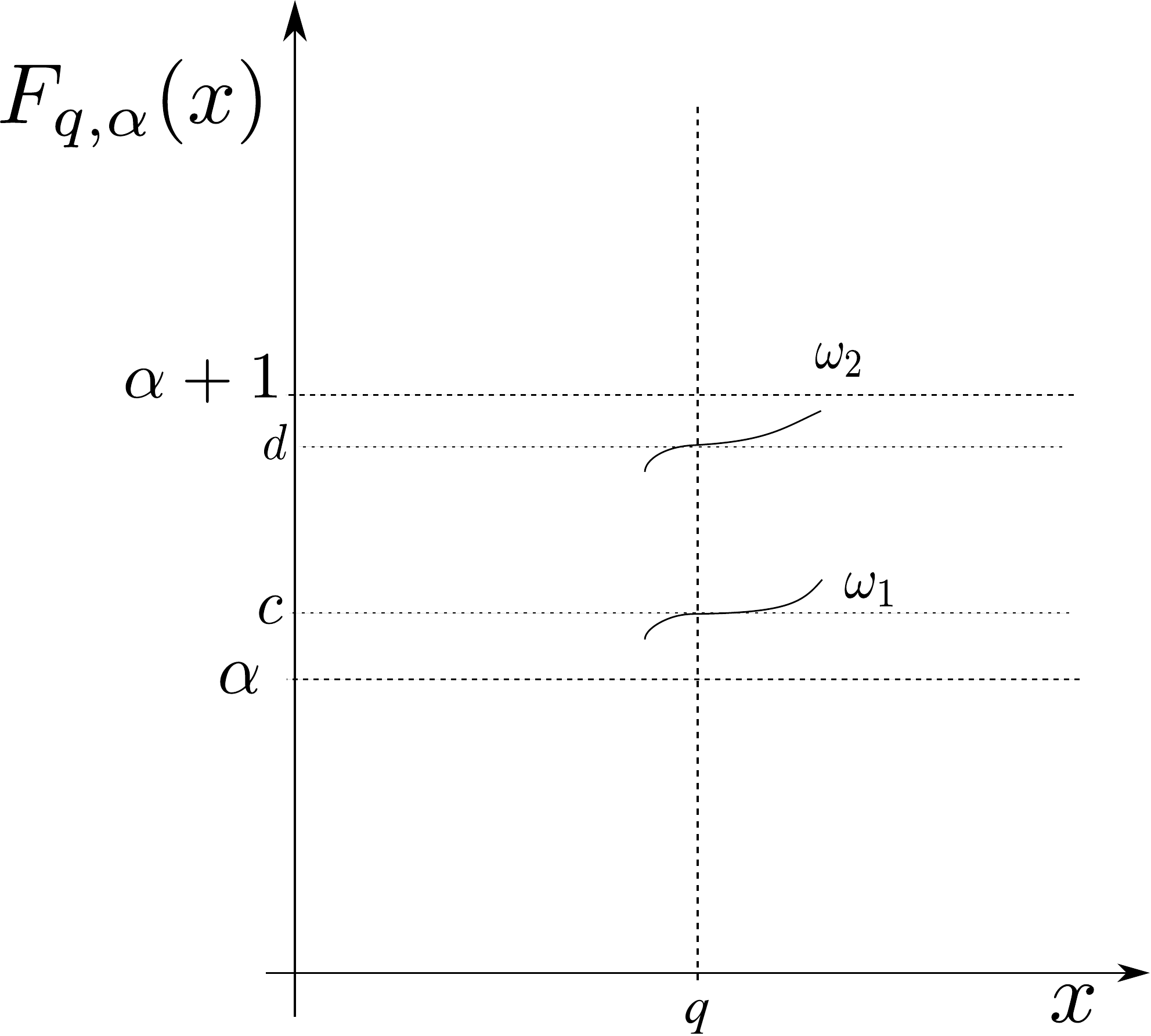}}
  
\vspace*{-0.3cm}
  \caption{(a) Interpretation of the sets $A_i$. (b) Interpretation for sets $B_j$.}
  \label{fig.FIG1}
\end{figure}

Note that $k$ above, as a function of $q'$, is right-continuous,
non-decreasing, constant by parts, with unitary jumps at the points of
a sequence $(q'_n)_{n\in \Z}$ with unitary increments, such that
$k(q'_n)=n$ and $q'_0\leq q<q'_1$. Moreover, the probability $\Prob
(A_{k(q')})$ is a right-continuous periodic function depending on
$q'$, with unitary positive jump discontinuities at the points of the
sequence $(q'_n)_{n\in \Z}$. It is decreasing in each interval $[q'_n,
q'_{n+1})$, with $\Prob (A_{k(q')})= 1$ if $q'= q'_n$ and decreases to
zero when $q'$ approaches $q'_{n+1}$ on the left.  Analogously, $l$ as
a function of $\alpha'$, is right-continuous, non-decreasing, constant
by parts, with unitary jumps at the points of a sequence
$(\alpha'_n)_{n\in \Z}$ with unitary increments, such that
$l(\alpha'_n)=n$ and $\alpha'_0\leq q<\alpha'_1$. Moreover, the
probability $\Prob (B_{l(\alpha')})$ is a right-continuous periodic
function depending on $\alpha'$, with unitary positive jump
discontinuities at the points of the sequence $(\alpha'_n)_{n\in
  \Z}$. It is decreasing in each interval $[\alpha'_n,
\alpha'_{n+1})$, with $\Prob (B_{l(\alpha')})= 1$ if $\alpha'=
\alpha'_n$ and decreases to zero when $q'$ approaches $\alpha'_{n+1}$
on the left.

We can state now the following relation between the rotation numbers 
$\rho_{q, \alpha}$, and $\rho_{q', \alpha'}$.
\begin{proposition} \label{Prop: compara RN topologicos} Let $(q, \alpha)$ and  
$(q', \alpha') $ be two sets of 
parameters for the rotation number of Definition (\ref{Def: RN rho_q_alpha}). 
Then we have that
\[
 \rho_{q', \alpha'} = \rho_{q, \alpha}  + \Prob (A_{k(q')}) - \Prob 
(B_{l(\alpha')})  - k + l. 
\] 
\end{proposition}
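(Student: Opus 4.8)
The plan is to reduce the statement to the ergodic formula \rf{eq.ergodic-formula} of Theorem~\ref{Thm: existencia RN_alpha} together with Corollary~\ref{Cor: existence with alternative lifts}. Since any two lifts of one and the same orientation preserving homeomorphism differ by an integer, for each $\omega\in\Omega$ there is $N(\omega)\in\Z$ with $F_{q',\alpha'}(\omega,\cdot)=F_{q,\alpha}(\omega,\cdot)+N(\omega)$; the deviation function of the left-hand lift is then $\delta_{q,\alpha}(\omega,\cdot)+N(\omega)$, so as soon as $N$ is integrable the formula \rf{eq.ergodic-formula} (equivalently Corollary~\ref{Cor: existence with alternative lifts}) gives $\rho_{q',\alpha'}=\rho_{q,\alpha}+\E[N]$. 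The whole proof thus amounts to identifying $N$ explicitly and computing $\E[N]$.

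To identify $N$, I would factor the comparison through the intermediate lift $F_{q,\alpha'}$, writing $N(\omega)=c_1(\omega)+c_2(\omega)$ with $F_{q',\alpha'}(\omega,\cdot)=F_{q,\alpha'}(\omega,\cdot)+c_1(\omega)$ and $F_{q,\alpha'}(\omega,\cdot)=F_{q,\alpha}(\omega,\cdot)+c_2(\omega)$, where $c_1(\omega),c_2(\omega)\in\Z$. Since these shifts are constant in the argument, each is read off at the point where the pinning condition of Definition~\ref{Def: alpha_lifts} is transparent. Evaluating the first identity at $q'$ and using $F_{q',\alpha'}(\omega,q')\in[\alpha',\alpha'+1)$ shows that $c_1(\omega)=-i$ exactly on the set $A_i$ (there $F_{q,\alpha'}(\omega,q')\in[\alpha'+i,\alpha'+i+1)$); evaluating the second at $q$ and using $F_{q,\alpha'}(\omega,q)\in[\alpha',\alpha'+1)$ shows that $c_2(\omega)=j$ exactly on $B_j$ (there $F_{q,\alpha}(\omega,q)\in[\alpha'-j,\alpha'-j+1)$, and translating by $j$ lands in $[\alpha',\alpha'+1)$). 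By the discussion preceding the statement only $A_k,A_{k+1}$ and $B_l,B_{l+1}$ are non-null, so $N=c_1+c_2$ takes finitely many values and is in particular integrable.

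It then remains to compute the two expectations. Because the $A_i$ partition $\Omega$ and only $A_k$ and $A_{k+1}$ carry positive probability, $\Prob(A_k)+\Prob(A_{k+1})=1$, whence $\E[c_1]=-k\Prob(A_k)-(k+1)\Prob(A_{k+1})=\Prob(A_k)-k-1$; symmetrically $\Prob(B_l)+\Prob(B_{l+1})=1$ gives $\E[c_2]=l\Prob(B_l)+(l+1)\Prob(B_{l+1})=l+1-\Prob(B_l)$. Summing these and recalling $k=k(q')$, $l=l(\alpha')$ yields $\E[N]=\Prob(A_{k(q')})-\Prob(B_{l(\alpha')})-k+l$, which is exactly the asserted relation. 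I do not foresee a genuine obstacle: the only delicate part is the bookkeeping of the integer shifts, in particular getting the signs right ($c_1=-i$ on $A_i$ but $c_2=+j$ on $B_j$, reflecting how $\alpha'+i$ versus $\alpha'-j$ enter the definitions of $A_i$ and $B_j$) and invoking the already established fact that at most two consecutive sets $A_i$ (resp. $B_j$) can be non-null, which is precisely what collapses the two-term expectations to the stated closed form; a quick check of the degenerate cases $(q',\alpha')=(q,\alpha)$ and $(q',\alpha')=(q+n,\alpha+m)$ recovers the known identities and is worth mentioning for reassurance.
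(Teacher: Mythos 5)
Your argument is correct and follows essentially the same route as the paper: both factor the comparison through the intermediate lift $F_{q,\alpha'}$, read off the integer shifts ($-i$ on $A_i$, $+j$ on $B_j$) from the pinning condition of Definition~\ref{Def: alpha_lifts}, and convert the resulting expectations into rotation numbers via the ergodic formula of Theorem~\ref{Thm: existencia RN_alpha}. The sign bookkeeping and the reduction $\Prob(A_k)+\Prob(A_{k+1})=1$, $\Prob(B_l)+\Prob(B_{l+1})=1$ match the paper's computation exactly.
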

\begin{proof}
  By the definition of the uniform random lift $F_{q, \alpha'}(\omega,
  \cdot)$ we have that $\Prob (A_{k} \dot{\cup} A_ {k+1}
  )=1$. Therefore,
 \begin{eqnarray*}
  F_{q', \alpha'} (\omega, x) & = & F_{q, \alpha'}(\omega, x) - k\, 
1_{A_k}(\omega) - 
(k+1)\, 1_{A_{k+1}}(\omega)\\
 & =& F_{q, \alpha'}(\omega, x) + 1_{A_k}(\omega) - k - 1,
 \end{eqnarray*}
 and thus, the deviations $\delta_{q', \alpha'} = \delta_{q, \alpha'}+
 1_{A_k}-k -1 $. This implies, by Theorem \ref{Thm: existencia
   RN_alpha}, in particular that
 \begin{equation} \label{form: comparacao 1}
 \rho_{q', \alpha'} = \rho_{q, \alpha'} + \Prob (A_k) -k - 1.
 \end{equation}
 
 \bigskip

 Again, by the uniformness of the lifts $F_{q, \alpha}$ we have that
 $\Prob (B_l \dot{\cup} B_ {l+1} )=1$, and,
 \begin{eqnarray*}
  F_{q, \alpha'}(\omega, x) &= & F_{q, \alpha}(\omega, x) + l \, 1_ { \{ B_l \} 
}(\omega) + (l+1) \, 1_{\{ B_{l+1}\}}(\omega) \\
 & = & F_{q, \alpha}(\omega, x) - 1_{B_l}(\omega)+ l + 1.
 \end{eqnarray*}
Hence, the deviations $\delta_{q, \alpha'} = \delta_{q, \alpha}- 1_{B_l} + l + 
1$. This implies, by Theorem \ref{Thm: existencia RN_alpha}, that 
  \begin{equation} \label{form: comparacao 2}
 \rho_{q, \alpha'} = \rho_{q, \alpha} - \Prob (B_l) + l + 1.
 \end{equation}
The result now follows by Formulae (\ref{form: comparacao 1}) and  (\ref{form: 
comparacao 2}) above.

\end{proof}

From Proposition \ref{Prop: compara RN topologicos} and the comments
right before it, we have that, for a fixed $\alpha$, the rotation
number $\rho_{q, \alpha}$ is right-continuous and decreasing with
respect to $q$. And for a fixed $q$, the rotation number $\rho_{q,
  \alpha}$ is right-continuous and increasing with respect to
$\alpha$.  Moreover, discontinuities with respect to $(q, \alpha)$
happens if and only if there exist an atom in the distribution of the
random variable $F_{q, \alpha}(\omega, q')$ for a certain $q'$. If
instead the random variable $F_{q, \alpha}(\omega, q')$ is absolutely
continuous with respect to Lebesgue measure in $\R$ for all $q' \in
\R$ (actually for $q'\in [q, q+1)$ is enough), then $\rho_{q, \alpha}$
is continuous in $(q, \alpha)$.

Li and Lu, in \cite[Thm. A.ii]{Li and Lu}, have proved that if a
random lift $F(\omega)$ is integrable, then, the map $L^1(\Omega)
\mapsto \rho$, which sends lifts $F(\omega)$ into its associated
rotation number, is continuous. In our case, if the distribution of
the random variable $F_{q, \alpha}(\omega, q')$ does have atoms, it
generates discontinuities for the rotation number $\rho_{q, \alpha}$,
which does not contradict the continuity result of Li and Lu since, in
this case, $(q, \alpha) \mapsto L^1(\Omega)$ is not continuous.

\section{Rotation number of orbits}

In this section we explore the concept of rotation number based on the
physical observation of the angular behaviour of single
orbits. Fix an $s_0 \in S^1$ and consider its random trajectory $s_n = 
f(\theta^{n-1}(\omega),
\cdot) \circ \cdots \circ f(\omega, s_0) $. We are going to lift this
orbit to $\R $ as an increasing random angle with bounded jumps. The
asymptotic average speed in $\R$ is the rotation number of its orbit. More
precisely, let $\gamma_0 \in [0, 1)$ be the initial normalised angle,
\textit{i.e.} $p(\gamma_0)= s_0$. Defining by induction $\gamma_n \in
\R$ as the angle in $\R$, such that, $p(\gamma_n)= s_n$ and
$\gamma_{n-1} \leq \gamma_n < \gamma_{n-1} + 1 $, gives us the
following.
\begin{definition} The rotation number of the random orbit $s_n$
  starting at $s_0$ is defined by the random variable
 \[
  OR_{(s_0)} = \lim_{n\rightarrow \infty} \frac{\gamma_n - \gamma_0}{n},
 \]
 when the limit exists.
\end{definition}
Next, we show that the rotation number of orbits exists almost surely
with respect to ergodic invariant measures for the skew product on
$\Omega \times S^1$.  Differently from the topological rotation number
$\rho_{q, \alpha}$ of the previous section, the rotation number of
orbits can in fact depend on the initial condition, c.f. Example 1 below.

\bigskip

\begin{theorem} \label{Thm: existencia NR de orbita} Let $\mu$ be an
  ergodic invariant probability measure for the skew product $\Theta$
  on $\Omega \times S^1$. Then the rotation number of an orbit $(s_n)$
  exists for $\mu$-almost every $(\omega, s_0)$.

\end{theorem}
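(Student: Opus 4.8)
The plan is to reduce the existence of $OR_{(s_0)}$ to an application of Birkhoff's ergodic theorem on the skew product $(\Omega\times S^1, \Theta, \mu)$, exactly as in the proof of Theorem~\ref{Thm: existencia RN_alpha}, by writing the increment $\gamma_n-\gamma_{n-1}$ as an integrable observable evaluated along the orbit $\Theta^{n-1}(\omega,s_0)$. First I would fix a uniform choice of lifts, say the $(0,0)$-lifts $F(\omega,\cdot):=F_{0,0}(\omega,\cdot)$, and denote by $\delta$ the corresponding deviation. The key observation is that, by construction, $\gamma_n$ is the unique real number with $p(\gamma_n)=s_n$ and $\gamma_n\in[\gamma_{n-1},\gamma_{n-1}+1)$; on the other hand $F(\theta^{n-1}\omega,\gamma_{n-1})$ is \emph{some} lift value of $s_n=f(\theta^{n-1}\omega,s_{n-1})$ lying in $[\gamma_{n-1},\gamma_{n-1}+2)$ (using the boundedness of $\delta$ from Proposition~\ref{Prop: propriedades da delta }, or rather the general bounded-amplitude estimate), so $\gamma_n$ differs from $F(\theta^{n-1}\omega,\gamma_{n-1})$ by the integer $0$ or $-1$. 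Hence
\[
\gamma_n-\gamma_{n-1}=\delta\bigl(\theta^{n-1}\omega,\gamma_{n-1}\bigr)-\varepsilon_{n-1},\qquad \varepsilon_{n-1}\in\{0,1\},
\]
where $\varepsilon_{n-1}=1$ precisely when $F(\theta^{n-1}\omega,\gamma_{n-1})\ge\gamma_{n-1}+1$, i.e. when the naive lift overshoots by a full turn.

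The second step is to recognize the right-hand side as a function on $\Omega\times S^1$ evaluated at $\Theta^{n-1}(\omega,s_0)$. By periodicity of $\delta$ we have $\delta(\theta^{n-1}\omega,\gamma_{n-1})=\delta\circ p^{-1}(\theta^{n-1}\omega,s_{n-1})$. The correction term is subtler because the event $\{\varepsilon_{n-1}=1\}$ depends on $\gamma_{n-1}\bmod 1$, i.e. on $s_{n-1}$, and on $\omega$ through $\theta^{n-1}$; but it depends on these only, not on the full history, since whether $F(\theta^{n-1}\omega,\cdot)$ carries the point across the next integer is determined by the value of $F_{0,0}(\theta^{n-1}\omega, p^{-1}(s_{n-1}))$ relative to the chosen branch. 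Concretely, define $g:\Omega\times S^1\to\R$ by
\[
g(\omega,s)=\delta\circ p^{-1}(\omega,s)-\mathbf{1}_{\{\,\delta\circ p^{-1}(\omega,s)\ \ge\ 1-\{p^{-1}(s)\}\,\}},
\]
where $\{\cdot\}$ denotes fractional part; then $\gamma_n-\gamma_{n-1}=g(\Theta^{n-1}(\omega,s_0))$ by unwinding the definitions. Since $|\delta\circ p^{-1}|$ is bounded (by~(\ref{Prop: propriedade limitacao do delta}) with $q=\alpha=0$) and the indicator is bounded by $1$, we have $g\in L^\infty(\mu)\subset L^1(\mu)$.

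The third step is then immediate: by Birkhoff's ergodic theorem applied to the ergodic system $(\Omega\times S^1,\Theta,\mu)$ and the integrable observable $g$,
\[
OR_{(s_0)}=\lim_{n\to\infty}\frac{\gamma_n-\gamma_0}{n}=\lim_{n\to\infty}\frac1n\sum_{i=0}^{n-1}g\bigl(\Theta^{i}(\omega,s_0)\bigr)=\E\int_{S^1}g(\omega,s)\,d\nu_\omega(s)
\]
for $\mu$-almost every $(\omega,s_0)$, which proves existence; one may note in passing that this already yields an ergodic formula analogous to~(\ref{eq.ergodic-formula}). I expect the main obstacle to be step two: carefully checking that the overshoot correction $\varepsilon_{n-1}$ really is a function of the current point $\Theta^{n-1}(\omega,s_0)$ of the skew product (and not of extra data), and writing it cleanly enough that measurability and the $L^1$ bound are transparent. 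A tidy way around the fractional-part bookkeeping is to observe that $\gamma_n=\lfloor\gamma_{n-1}\rfloor+G\bigl(\theta^{n-1}\omega,\{\gamma_{n-1}\}\bigr)$ for a fixed measurable $G$ built from $F_{0,0}$, so that $\gamma_n-\gamma_{n-1}=G(\theta^{n-1}\omega,\{\gamma_{n-1}\})-\{\gamma_{n-1}\}$ depends only on $(\theta^{n-1}\omega,s_{n-1})$; this is visibly bounded since $0\le G<2$ and $0\le\{\gamma_{n-1}\}<1$. Everything else is a routine invocation of Birkhoff together with, if one wants the formula to hold $\Prob$-a.s. rather than $\mu$-a.s., the same fibre-extension argument used at the end of the proof of Theorem~\ref{Thm: existencia RN_alpha}.
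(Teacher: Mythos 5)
Your argument is essentially the paper's proof: the paper packages your observable $g$ directly by introducing the discontinuous lift $\bar F=Id+\bar\delta$ with $\bar\delta(\omega,\cdot)$ taking values in $[0,1)$, so that $\gamma_n=\bar F(\theta^{n-1}\omega,\cdot)\circ\cdots\circ\bar F(\omega,\gamma_0)$, and then runs the same Birkhoff computation as in Theorem~\ref{Thm: existencia RN_alpha}; your ``tidy way'' with $G$ is precisely this $\bar F$. Two cosmetic slips that do not affect the structure: since $\delta_{0,0}\in(-1,2)$ the correction integer ranges over $\{-1,0,1\}$ rather than $\{0,1\}$ (so $g$ needs a term $+\mathbf{1}_{\{\delta\circ p^{-1}<0\}}$ as well), and the overshoot indicator should read $\mathbf{1}_{\{\delta\circ p^{-1}(\omega,s)\ge 1\}}$ without the fractional-part term.
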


\begin{proof}
  Let $\bar{\delta}(x)$ be the unique (possibly discontinuous)
  deviation function with image in $[0,1)$, such that, $\bar{F}(x) =
  Id(x) + \bar{\delta}(x)$ satisfies $f(\omega, \cdot ) \circ p = p
  \circ \bar{F}(\omega, x)$, \textit{i.e.}, $\bar{F}$ is a
  discontinuous lift of the homeomorphism $f$. The dynamics of the
  increasing angles $\gamma_n$ can be described in terms of this
  discontinuous lift $\bar{F}$ as follows:
\[
 \gamma_n = \bar{F}(\theta^{n-1}(\omega), \cdot) \circ \cdots \circ 
\bar{F}(\omega, 
\gamma_0).
\]
The rest of the proof follows as in the proof of Theorem \ref{Thm:
  existencia RN_alpha}, since the same calculation there also holds
for discontinuous lifts. We have obviously that $\bar{\delta} \in L^1
(\mu)$, hence,
\[
 OR_{(s_0)} = \E \int_{S^1} \bar{\delta}(\omega, x) \  \nu_{\omega} (ds) \ \ 
\ \
\ \ \ \mu \mbox{- a.s.}
\]

\end{proof}

Next Corollary gives a comparison formula between the orbit rotation
number $OR_{s_0}$ of this section and the family of topological
rotation numbers $\rho_{q, \alpha}$ of the last section.

\begin{corollary} \label{Prop: compara RN fisico com topologico} Fix a
  pair of parameters $(q, \alpha)$.  For each ergodic invariant
  probability measure $\mu(d\omega, ds)= \nu_{\omega}(ds)\Prob (d
  \omega)$ on $\Omega \times S^1$ we have that
\begin{eqnarray*}
OR_{(s_0)}  =  &\rho_{q,\alpha} & -   k \ \ \E \,[ \, \mu_{\omega} \{s\in S^1: 
\delta_{q,\alpha}(p^{-1}(s)) \in (k, k+1) \}] \\
 && -   (k+1) \ \E \, [\, \mu_{\omega} \{s\in S^1: 
\delta_{q,\alpha}(p^{-1}(s))\in [k+1, k+2) \}]\\
& & - (k+2)  \ \E \,  [\, \mu_{\omega} \{s\in S^1: 
\delta_{q,\alpha}(p^{-1}(s))\in [k+2, k+3) \}]
\end{eqnarray*}
$\mu$-a.s., where $k= \lfloor (\alpha-q) -1
\rfloor$.
\end{corollary}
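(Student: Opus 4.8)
The plan is to subtract the two ergodic formulae already at hand. Exactly as in the proof of Theorem~\ref{Thm: existencia RN_alpha}, the ergodic decomposition reduces everything to the case of an ergodic invariant measure $\mu=\nu_\omega(ds)\,\Prob(d\omega)$, so fix such a $\mu$. On the one hand, Theorem~\ref{Thm: existencia RN_alpha} gives
\[
\rho_{q,\alpha}=\E\int_{S^1}\delta_{q,\alpha}(\omega,p^{-1}(s))\,d\nu_\omega(s)\qquad\mu\text{-a.s.};
\]
on the other hand, the proof of Theorem~\ref{Thm: existencia NR de orbita} gives, with $\bar{\delta}$ the (discontinuous) deviation function taking values in $[0,1)$,
\[
OR_{(s_0)}=\E\int_{S^1}\bar{\delta}(\omega,p^{-1}(s))\,d\nu_\omega(s)\qquad\mu\text{-a.s.}
\]
Discarding the union of the two exceptional sets, both identities hold off a single $\mu$-null set.

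The crucial observation is that $F_{q,\alpha}(\omega,\cdot)$ and $\bar{F}(\omega,\cdot)=Id+\bar{\delta}(\omega,\cdot)$ are, respectively, a continuous and a discontinuous lift of one and the same homeomorphism $f(\omega,\cdot)$, and therefore differ pointwise by an integer; since $\bar{\delta}$ is forced to take values in $[0,1)$, that integer is precisely the integer part of $\delta_{q,\alpha}$, i.e.
\[
\bar{\delta}(\omega,x)=\delta_{q,\alpha}(\omega,x)-\lfloor\delta_{q,\alpha}(\omega,x)\rfloor\fa x\in\R.
\]
Subtracting the two displays above, and noting that $\lfloor\delta_{q,\alpha}\rfloor$ is bounded by~(\ref{Prop: propriedade limitacao do delta}) and hence lies in $L^1(\mu)$, we obtain
\[
\rho_{q,\alpha}-OR_{(s_0)}=\E\int_{S^1}\lfloor\delta_{q,\alpha}(\omega,p^{-1}(s))\rfloor\,d\nu_\omega(s)\qquad\mu\text{-a.s.}
\]

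It remains to expand $\lfloor\delta_{q,\alpha}\rfloor$ as $\sum_m m\,1_{\{\delta_{q,\alpha}\in[m,m+1)\}}$, the sum being over the finitely many integers $m$ actually attained. By the boundedness estimate~(\ref{Prop: propriedade limitacao do delta}) together with the choice $k=\lfloor(\alpha-q)-1\rfloor$, these values are $m=k,k+1,k+2$: the value $m=k$ occurs on the set where $\delta_{q,\alpha}(\omega,p^{-1}(s))\in(k,k+1)$ (the left endpoint being excluded because $\delta_{q,\alpha}>(\alpha-q)-1\ge k$), while $m=k+1$ and $m=k+2$ occur on the sets where $\delta_{q,\alpha}(\omega,p^{-1}(s))$ lies in $[k+1,k+2)$ and in $[k+2,k+3)$, respectively. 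Substituting this decomposition into the last display and pulling the constants $k$, $k+1$, $k+2$ out of the expectation and out of the integral over $S^1$ yields precisely the asserted formula (the $\mu_\omega$ of the statement being the disintegration $\nu_\omega$).

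I expect the only delicate part to be this last step of bookkeeping: one must check, using~(\ref{Prop: propriedade limitacao do delta}) and the precise normalisation encoded in $k=\lfloor(\alpha-q)-1\rfloor$, exactly which integer values the floor of $\delta_{q,\alpha}$ attains and which of the interval endpoints are to be included. Once the identity $\bar{\delta}=\delta_{q,\alpha}-\lfloor\delta_{q,\alpha}\rfloor$ is in place, everything else follows directly from the two ergodic formulae of Theorems~\ref{Thm: existencia RN_alpha} and~\ref{Thm: existencia NR de orbita}.
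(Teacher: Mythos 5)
Your proposal is correct and follows essentially the same route as the paper: both identify $\bar{\delta}=\delta_{q,\alpha}-\lfloor\delta_{q,\alpha}\rfloor$ (the paper writes this floor directly as the indicator sum over $\{k,k+1,k+2\}$ using the bound (\ref{Prop: propriedade limitacao do delta})) and then subtract the two ergodic formulae of Theorems~\ref{Thm: existencia RN_alpha} and~\ref{Thm: existencia NR de orbita}. Your endpoint bookkeeping, including why $\delta_{q,\alpha}$ never attains the value $k$, matches the paper's.
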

\begin{proof} We write the discontinuous deviation
  $\bar{\delta}(\omega, x) \in [0,1)$ associated to $OR_{(s_0)}$ in
  terms of the uniform deviation $\delta_{q,\alpha}$. In order to do
  that, we only have to add to $\delta_{q,\alpha}$ appropriate
  integers which depend on $\omega$ and on $x\in \R$. Inequality
  (\ref{Prop: propriedade limitacao do delta}) says that $(\alpha-
  q)-1 < \delta_{q, \alpha}(x)< (\alpha-q) + 2$, which implies that
  this correction random integer ranges only in the set $\{ k, k+1,
  k+2\}$. In particular, one checks that
\begin{eqnarray*}
  \bar{\delta}(\omega, x)= 
  \delta_{q,\alpha}(x) &-& k\  1_{\{ \delta_{q,\alpha} (\omega,x) \in (k, 
    k+1)\} } \\
  &-& (k+1)\, 1_{\{ \delta_{q,\alpha} (\omega,x) \in [k+1, k+2)\}} \\
  &-& (k+2)\, 1_{\{ 
    \delta_{q,\alpha} (\omega,x) \in [k+2, k+3)\}}.
 \end{eqnarray*}
 The result follows by the ergodic formula at the end of the proof of
 Theorem \ref{Thm: existencia NR de orbita}.
 
\end{proof}
As an example, one verifies that with parameters $(q, \alpha) =
(0,0)$, we have that $OR_{(s_0)} = \rho_{0,0} + \E [\mu_{\omega} \{s\in S^1:
\delta_{0,0}(p^{-1}(s))< 0 \}] - \E [\mu_{\omega} \{s\in S^1:
\delta_{0,0}(p^{-1}(s))\geq 1 \}]$, $\mu$-a.s..

\bigskip

Next example is a simple construction which shows the dependency of
the orbit rotation number with the initial condition of the skew
product $\Theta$.

\bigskip

\begin{example} \emph{
  Parametrise $S^1$ by $x\mapsto e^{2\pi x i}$ with $x\in (-1/2,
  1/2]$. Let $\Omega = \{ 1,2,3,4 \} $, with $\Prob [i]= \frac{1}{4}$
  for every $i \in \Omega$. Consider the ergodic transformation given
  by the permutation which in cycle notation is $\theta=
  (1,2,3,4)$. Let $f(1),f(2),f(3), f(4): S^1 \rightarrow S^1$ be
  homeomorphisms in $\mathcal{H}^+$ such that $f(\omega,0)=0$, for all
  $\omega \in \Omega$ and $f(1, 1/8)=3/8$, $f(2, 3/8)=-3/8$, $
  f(3,-3/8)=-1/8$ and $f(4,-1/8)=1/8$.  For this random dynamics we
  have that $\rho_{0, \alpha}= \lfloor \alpha \rfloor $, since zero is
  a fixed point and $\delta_{0, \alpha}(\omega, 0)= \lfloor \alpha
  \rfloor $ for all $\omega \in \Omega$. The orbit rotation number
  starting at zero is given by $OR_{0}=0$, corresponding to a Dirac
  invariant measure at zero for all $\omega \in \Omega$. Nevertheless,
  for $x_0= 1/8$, $\omega= 1$ we have that the evolution of increasing
  angles is given by
\[
\gamma_{n}=\frac{2n+1}{8},
\]
which yields $OR_{\frac{1}{8}}= 1/4$, corresponding to the ergodic
invariant measure $\mu$ given by the normalised sum of Dirac measures
in $(1, \frac{1}{8})$, $(2, \frac{3}{8})$, $(3, -\frac{3}{8})$ and
$(4, -\frac{1}{8})$, \textit{i.e.} a periodic orbit of $\Theta$.}
\end{example}

\section{Sampling time Theorem}
\label{sec.samplingThm}

In \cite{Ruffino_ROSE} it has been studied the rotation number for a
sequence of random matrices acting on $\R^2$. Such quantity
corresponds to a counter part of the Oseledet's theorem for Lyapunov
exponents for a product of random matrices with invariant 2-subspaces,
see \textit{e.g.}, Arnold \cite{Arnold} and references therein. In the
context of linear systems on $S^1$, it was proved in that paper a
sampling theorem for discretisation of the flow at time interval
$\Delta t$, such that, the rotation number of the discrete system,
when rescaled by $\frac{1}{\Delta t}$, converges to the rotation
number of the original continuous systems. The main result in this
section generalises this sampling theorem, extending its scope from
linear systems acting on $S^1$ to nonlinear equations intrinsic on
$S^1$. We are going to show that, adequate choices of the parameters
$(q, \alpha)$ will lead to a compatibility of the definitions of
rotation numbers for discrete systems with continuous systems.

Consider a classical stochastic flow of
diffeomorphisms on $S^1$ generated by a Stratonovich differential
equation:
\begin{equation} 
\label{Eq: fluxo estocastico em S^1} 
ds_t = H^0 (s_t)\ dt + \sum^{m}_{j=1} H^j(s_t)\ \circ dB_t^j
\end{equation}
with initial condition $s_0 \in S^1$, where $H^0, H^1, \ldots , H^m$
are smooth vector fields on $S^1$ and $(B^1_t, \ldots, B_t^m)$ is a
standard Brownian motion on $\R^m$ with respect to a filtered
probability space $(\Omega, \mathcal{F}, \mathcal{F}_t, \Prob)$. Here we 
consider the classical Wiener space 
$\Omega= \{ \sigma:[0,\infty] \rightarrow \R, \mbox{ continuous, with } 
\sigma(0)=0 
\}$ with the Wiener probability measure, such that the canonical process $t 
\mapsto \omega(t)$ is a Brownian motion. The ergodic shift $\theta_t: \Omega 
\rightarrow \Omega$ is given by $\theta_t (\omega)(s)= \omega(t+s)-\omega(t)$, 
for all $t\geq 0$. 
Equation (\ref{Eq: fluxo estocastico em S^1}) can be lifted to the covering 
space $\R$, and written as
\begin{equation} 
\label{Eq: flow estocastico em R}
 dx_t = h^0 (x_t)\ dt + \sum^{m}_{j=1} h^j(x_t)\ \circ dB_t^j
\end{equation}
with initial condition $x_0$ such that $p (x_0)= s_0$, where the
functions $h^i$, $i=0,1,\ldots, m$ are periodic with $h^i (x)= H^i
(p(x))$ for all $x\in \R$. If $x_t$ is a solution of equation
(\ref{Eq: flow estocastico em R}), then $s_t= p(x_t)$ solves Equation
(\ref{Eq: fluxo estocastico em S^1}). Let $\varphi_t$ denote the
stochastic flows of diffeomorphisms on $S^1$ generated by Equations
(\ref{Eq: fluxo estocastico em S^1}), and let $\psi_t$
denote the stochastic flows of diffeomorphisms on $\R$ generated by
Equations (\ref{Eq: flow estocastico em R}). Then $p \circ \psi_t =
\varphi_t \circ p$, \textit{i.e.} $\psi_t$ are Poincar\'e lifts of
$\varphi_t$ for all $t\geq 0$. The classical rotation number of the
continuous stochastic flow $\varphi_t:S^1 \rightarrow S^1$ is given by
the average winding of trajectories:
\begin{equation*}
 \mathrm{rot}(\varphi) = \lim_{t\rightarrow \infty } \frac{\psi_t 
(\omega, x_0)}{t}
\end{equation*}
whose existence $\Prob$-almost surely is guaranteed by standard
stochastic analysis technique and the ergodic theorem for Markov
process, see \textit{e.g.} \cite{Ruffino_SSR}. This rotation number is
independent of $x_0$ and is given by
\[
 \mathrm{rot}(\varphi) = \int_{S^1} H^0(s) + \frac{1}{2} \sum_{j=1}^m \left( 
dH^i(s) H^i(s) \right)\ d\nu (s) \ \ \ \ \ \ \Prob\mbox{-a.s.},
\]
where $\nu$ is an ergodic invariant measure on $S^1$ for the diffusion 
generated  by equation (\ref{Eq: fluxo estocastico em S^1}).
 
Next theorem says that if the parameters $q$ and $ \alpha$ are in an
appropriate range, when we sample the time periodically at an interval
$ \Delta t>0$, then the normalised rotation number $\rho_{q,\alpha}$
of the discrete cocycle of diffeomorphisms $\varphi_{\Delta t}$
recover the original (continuous) rotation number
$\mathrm{rot}(\varphi)$, when the frequency of sampling tends to infinite.

\begin{theorem} \label{thm: amostragem} For $q-1 <\alpha < q \in \R$,
  the rotation number of a stochastic continuous systems (\ref{Eq:
    fluxo estocastico em S^1}) can be recovered by the limit of the
  rescaled rotation:
 \[  
 \mathrm{rot} (\varphi_t) = \lim_{\Delta t \rightarrow 0}
 \frac{\rho_{q, \alpha} (\varphi_{\Delta t})}{\Delta t},
 \]
 where $\rho_{q, \alpha} (\varphi_{\Delta t})$ is the topological
 rotation number of the discrete random system generated by
 $\varphi_{\Delta t}(\omega, \cdot)$ with respect to the 
canonical shift $\theta=\theta_{\Delta t}$ in the probability space $\Omega$.
\end{theorem}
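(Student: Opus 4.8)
\medskip
\noindent\textbf{Proof plan.}
The plan is to evaluate the ergodic formula $(\ref{eq.ergodic-formula})$ of Theorem~\ref{Thm: existencia RN_alpha} for the time-sampled cocycle $(\varphi_{\Delta t},\theta_{\Delta t})$ and to recognise its integrand, after a Stratonovich--It\^o correction, as a one-step approximation to $\mathrm{rot}(\varphi)$. Let $\psi_{\Delta t}(\omega,\cdot)$ be the canonical Poincar\'e lift of $\varphi_{\Delta t}$, namely the flow of $(\ref{Eq: flow estocastico em R})$ (so $\psi_0=\mathrm{Id}$), and write $d_{\Delta t}(\omega,x)=\psi_{\Delta t}(\omega,x)-x$ for its periodic deviation. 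Since $\psi_{\Delta t}(\omega,\cdot)$ depends only on the increments of $\omega$ on $[0,\Delta t]$, the homeomorphisms $(\varphi_{\Delta t}(\theta_{\Delta t}^{n}\omega,\cdot))_{n\ge 0}$ are i.i.d., and the ergodic stationary measure $\nu$ of $(\ref{Eq: fluxo estocastico em S^1})$ appearing in the formula for $\mathrm{rot}(\varphi)$ is automatically invariant for the time-$\Delta t$ map; hence $\nu(ds)\,\Prob(d\omega)$ is an invariant measure for the skew product. The $(q,\alpha)$-lift of $\varphi_{\Delta t}$ is $F_{q,\alpha}(\varphi_{\Delta t})=\psi_{\Delta t}+N_{\Delta t}$, where $N_{\Delta t}(\omega)\in\Z$ is the unique integer with $\psi_{\Delta t}(\omega,q)+N_{\Delta t}(\omega)\in[\alpha,\alpha+1)$, and $N_{\Delta t}$ is integrable because $\E|\psi_{\Delta t}(\omega,q)|<\infty$. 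Theorem~\ref{Thm: existencia RN_alpha} (equivalently Corollary~\ref{Cor: existence with alternative lifts}) then gives
\[
 \rho_{q,\alpha}(\varphi_{\Delta t}) \;=\; R_{\Delta t}+\E[N_{\Delta t}],
 \qquad R_{\Delta t}:=\E\int_{S^1}d_{\Delta t}\bigl(\omega,p^{-1}(s)\bigr)\,d\nu(s),
\]
and it remains to prove $R_{\Delta t}=\Delta t\cdot\mathrm{rot}(\varphi)$ and $\E[N_{\Delta t}]=o(\Delta t)$.

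For the first identity I would integrate $(\ref{Eq: flow estocastico em R})$ from $0$ to $\Delta t$, rewrite the Stratonovich integrals in It\^o form, and take expectations: the It\^o integrals $\int_0^{\Delta t}h^j(x_u)\,dB^j_u$ have bounded integrands and hence vanish in mean, leaving $\E[\psi_{\Delta t}(\omega,x_0)-x_0]=\int_0^{\Delta t}\E\bigl[b(\psi_u(\omega,x_0))\bigr]\,du$, where $b=h^0+\tfrac12\sum_j(h^j)'h^j$ is the It\^o drift. This $b$ is periodic, $b=B\circ p$ with $\int_{S^1}B\,d\nu=\mathrm{rot}(\varphi)$. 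Integrating over $s_0=p(x_0)$ against $\nu$, applying Fubini (bounded coefficients give a bound on the displacement that is uniform in $x_0$) and the invariance $\nu P_u=\nu$ of the diffusion semigroup, the inner integral collapses to $\mathrm{rot}(\varphi)$ for every $u$, so $R_{\Delta t}=\int_0^{\Delta t}\mathrm{rot}(\varphi)\,du=\Delta t\cdot\mathrm{rot}(\varphi)$.

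For the second estimate is where the hypothesis $q-1<\alpha<q$ enters: it says precisely that $q$ lies in the interior of $[\alpha,\alpha+1)$, so $c_0:=\min\{q-\alpha,\ \alpha+1-q\}>0$, and whenever $|\psi_{\Delta t}(\omega,q)-q|<c_0$ the canonical lift already meets the normalisation of Definition~\ref{Def: alpha_lifts}, forcing $N_{\Delta t}(\omega)=0$. Thus $N_{\Delta t}$ vanishes off the event $E_{\Delta t}:=\{\,|\psi_{\Delta t}(\omega,q)-q|\ge c_0\,\}$, while $|N_{\Delta t}|\le|\psi_{\Delta t}(\omega,q)-q|+1$ in general. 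Now $\psi_{\Delta t}(\omega,q)-q$ splits as a finite-variation part of size $O(\Delta t)$ plus a continuous martingale $M$ with $\langle M\rangle_{\Delta t}\le C\,\Delta t$, where $C$ depends only on $\max_j\|h^j\|_\infty$; an exponential (Bernstein-type) martingale inequality then yields $\Prob(E_{\Delta t})\le 2\exp(-c_0^2/(C'\Delta t))$ for $\Delta t$ small, together with $\E|\psi_{\Delta t}(\omega,q)-q|^2\le C''\Delta t$. By Cauchy--Schwarz, $|\E[N_{\Delta t}]|\le \E\bigl[(|\psi_{\Delta t}(\omega,q)-q|+1)\,\mathbf{1}_{E_{\Delta t}}\bigr]\le (C''\Delta t)^{1/2}\Prob(E_{\Delta t})^{1/2}+\Prob(E_{\Delta t})$, which is $o(\Delta t)$. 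Dividing the displayed identity by $\Delta t$ and letting $\Delta t\to 0$ gives $\rho_{q,\alpha}(\varphi_{\Delta t})/\Delta t\to\mathrm{rot}(\varphi)$.

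I expect the tail estimate to be the only genuine obstacle: one must bound, by a quantity beating every power of $\Delta t$, the probability that the single-step displacement $\psi_{\Delta t}(\omega,q)-q$ leaves a fixed window around $q$, so that the integer mismatch between the uniform $(q,\alpha)$-lift and the canonical flow lift contributes nothing to the rescaled limit. The remaining ingredients---invariance of $\nu$ under the diffusion, the Stratonovich--It\^o correction, and the ergodic formula $(\ref{eq.ergodic-formula})$ already in hand---are routine.
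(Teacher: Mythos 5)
Your proposal is correct, and both halves of it differ from the paper's argument in substance. For the main term, the paper never touches stochastic calculus: it observes that $\psi_{n\Delta t}=\psi_{\Delta t}(\theta^{n-1}\omega,\cdot)\circ\cdots\circ\psi_{\Delta t}(\omega,\cdot)$, so the a.s.\ limit defining $\mathrm{rot}(\varphi)$ along times $n\Delta t$ is exactly the per-step rotation number of the (nonuniform) lift $\psi_{\Delta t}$, which Corollary~\ref{Cor: existence with alternative lifts} identifies as $\rho_{q,\alpha}(\varphi_{\Delta t})$ up to $\E[N_{\Delta t}]$; you instead prove the exact identity $\E\int_{S^1}(\psi_{\Delta t}-\mathrm{Id})\,d\nu=\Delta t\cdot\mathrm{rot}(\varphi)$ via the Stratonovich--It\^o correction and $\nu P_u=\nu$, which is a genuine (and clean) alternative that also explains where the formula $\mathrm{rot}(\varphi)=\int_{S^1}H^0+\tfrac12\sum_j dH^jH^j\,d\nu$ comes from. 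For the error term, both proofs exploit $q-1<\alpha<q$ in the same way (the identity-at-time-zero lift satisfies the $(q,\alpha)$-normalisation unless the one-step displacement at $q$ exceeds $c_0=\min\{q-\alpha,\alpha+1-q\}>0$), but the paper bounds $\E[N^{\pm}]$ by integrating Kusuoka--Stroock Gaussian upper bounds on the transition density of a \emph{non-degenerate} diffusion, whereas you use an exponential (Bernstein-type) martingale inequality on the It\^o decomposition of $\psi_{\Delta t}(\omega,q)-q$ plus Cauchy--Schwarz. Your route is more elementary and, notably, does not require the non-degeneracy that the paper's heat-kernel bounds implicitly assume but the theorem statement does not impose; the only cosmetic slip is that the crude bound should read $|N_{\Delta t}|\le|\psi_{\Delta t}(\omega,q)-q|+2$ rather than $+1$, which changes nothing.
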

For proving \ref{thm: amostragem}, we exploit the fact that at each
time $t\geq 0$, the flow $\psi_t(\omega, \cdot)$ for the system on
$\R$, Equation (\ref{Eq: flow estocastico em R}), is a lift of the
system $\varphi_t(\omega, \cdot)$ on $S^1$, Equation (\ref{Eq: fluxo
  estocastico em S^1}). We have to control, for $t>0$, those lifts
$\psi_t= Id + \delta_t$ whose corresponding deviation $\delta_t$ are
not in the prescribed interval $\delta_t (q) \in [\alpha, \alpha +1)
$.

\begin{proof}[Proof of Theorem~\ref{thm: amostragem}]
  Let $F_{\Delta t}(\omega):\R \rightarrow \R $ denote the $(q,
  \alpha)$-lift of $\varphi_t$ according to Definition \ref{Def:
    alpha_lifts}. Then $F_{\Delta t} = \psi_{\Delta t} + N_{\Delta
    t}(\omega)$, where $N_{\Delta t}(\omega)$ is a random integer
  variable. By the cocycle property and Corollary \ref{Cor: existence
    with alternative lifts}, we have that for $n\in \N$,
\begin{eqnarray*}
  \mathrm{rot}(\varphi) & = &\lim_{n\rightarrow \infty} \frac{\psi_{n \Delta 
      t  }(x_0) - x_0}{n 
    \Delta t}  \\
  & = &  \lim_{n\rightarrow \infty} \frac{\psi_{ \Delta t}(\theta^{n-1}\omega, 
    \cdot) \circ \cdots \circ \psi_{\Delta t}(\theta \omega, x_0 )
    \circ \psi_{\Delta t}(\omega, x_0 )}{n 
    \Delta t}\\
  & = & \frac{1}{\Delta t} \left( \rho_{q, \alpha}(\varphi_{\Delta t}) + \E [N] 
  \right).
\end{eqnarray*}
Therefore, we only have to prove that
\[
 \lim_{\Delta t \searrow 0} \frac{\E [|N| ]}{\Delta t} = 0.
\]
Note that the random variable $N$ counts how many times continuous
trajectories of the original system on $S^1$ cross, in the
anticlockwise direction, the point $p (\alpha) \in S^1$ up to time
$\Delta t$. Thus, we have that

\[
 N(\omega)= \sum_{n\in \Z} n 1_{ \Omega_n}  ~,
 \]
 where
 \[
 \Omega_n = \left\{ \psi_{\Delta t}(\omega, q) 
\in [\alpha+n, \alpha + n+1) \right\}.
\]
We control the expectation of $N$ using boundedness on the
distribution of $\psi_{\Delta t} (\omega,q)$. Let $p(t, x,y)$ be the
density of the transition probability measure associated to a
non-degenerate diffusion given by Equation (\ref{Eq: flow estocastico
  em R}). Then, there exists a constant $M>0 $ such that,
\[
\frac{1}{M \sqrt{t}} e^{-M\frac{(x-y)^2}{t}} \leq p (t, x,y) \leq \frac{M}{ 
\sqrt{t}} e^{-\frac{(x-y)^2}{Mt}}.
\]
See Kusuoka and Stroock \cite{Kusuoka and Stroock, Kusuoka and Stroock-1988}. 
Let $N^+= \max \{N, 0\}$ and $N^- =  \max \{-N, 0 \} $, such that 
$N=N^+ - N^-$. Hence, for the positive 
part $N^+$
\begin{eqnarray*}
 \E [ N^+ ] & \leq & M \int_{\alpha +1}^{\infty} \left( \lfloor x-(\alpha+1) 
\rfloor + 1 \right) \frac{1}{\sqrt{\Delta t}} \exp\left\{- \frac{ 
(x-q)^2}{M \Delta t}\right\} \ dx \\
  && \\
  & \leq & M \int_{\alpha +1}^{\infty} \left( x- \alpha \right) 
\frac{1}{\sqrt{\Delta t}} \exp\left\{- \frac{ 
(x-q)^2}{M \Delta t}\right\} \ dx.
\end{eqnarray*}
And for the negative part:
\begin{eqnarray*}
 \E [ N^- ] & \leq & M \int_{-\infty}^{\alpha} \left( \lfloor \alpha -x 
\rfloor + 1 \right) \frac{1}{\sqrt{\Delta t}} \exp\left\{ - \frac{ 
(x-q)^2}{M \Delta t}\right\} \ dx \\
  && \\
  & \leq & M \int_{-\infty}^{\alpha} \left(  \alpha -x + 1 \right) 
\frac{1}{\sqrt{\Delta t}} \exp\left\{ - \frac{ 
(x-q)^2}{M \Delta t}\right\} \ dx.
\end{eqnarray*}
Changing variables, for $\Delta t \in (0,1)$ we have that
\begin{eqnarray*}
 \E [ N^+ ] & \leq & M^{\frac{3}{2}} \int_{\frac{\alpha -q 
+1}{\sqrt{M \Delta t}}}^{\infty} \left( \sqrt{M} u + q - \alpha  \right) 
 \exp\left\{- u^2 \right\} \ du
\end{eqnarray*}
and
\begin{eqnarray*}
 \E [ N^- ] & \leq & M^{\frac{3}{2}} \int_{\frac{ q -\alpha}{\sqrt{M \Delta 
t}}}^{\infty} \left( \sqrt{M} u + \alpha -q + 1  \right) 
 \exp\left\{- u^2 \right\} \ du
\end{eqnarray*}

Hence $\E [ N^+ ]$ and $\E [ N^- ]$ goes to zero when $\Delta t$ goes to zero. 
Moreover, by standard calculus argument, 
using that $\lim_{z\rightarrow 0} 
\exp\left\{ {-\frac{1}{z}} \right\} z^{\beta}=0$ for any exponent $\beta \in 
\R$, then finally we 
get that
\[
  \lim_{\Delta t \searrow 0} \frac{\E [N^+ ]}{\Delta t} =  \lim_{\Delta t 
\searrow 0} \frac{\E [N^- ]}{\Delta t} = 0.
\]

\end{proof}

\begin{remark}
\label{remark-end}
\emph{ We remark that this sampling theorem does not hold either if $\alpha 
\geq q$ 
or 
if  $\alpha \leq  q-1 $. In this case, note that either $\E [ N^+ ]$ or $\E [ 
N^- ]$ 
does not converge to zero when $\Delta t$ goes to zero. Also, the 
theorem does not hold if one consider the 
rotation number of orbits (Section 3). As a counterexample  
consider a
deterministic system driven by a North-South vector fields in $S^1$ embedded 
in $\R^2$. The rotation number $\mathrm{rot}(\varphi)$
 of the continuous system is obviously zero. But the orbit rotation number 
$OR_{s_0}$ for any discretisation in time  of the flow is zero for orbits 
with $s_0$ on the right hand side of $S^{1}$, and 1 for
orbits with $s_0$ on the left hand side of $S^1$, independently of the time 
interval of discretisation $\Delta t$. }
\end{remark}

\section*{Acknowledgements}

The authors would like to thank the referee for valuable comments and 
suggestions leading to improvements in the article. C.S.R. acknowledges the 
hospitality in the Institute of Mathematics of
the Universidade Estadual de Campinas during his visit in
2010/2011. He has been partially supported by the European Research
Council under the European Union's Seventh Framework Program
(FP7/2007-2013) / ERC grant agreement $n^o$ 267087. P. R. acknowledges
the hospitality in the Max Planck Institute for Mathematics in
Sciences - Leipzig during visits in 2011 and 2013. He has been
partially supported by FAPESP 11/50151-0, 12/18780-0 and CNPq 477861/2013-0.


\begin{thebibliography}{amsalpha}

\bibitem{Arnold}  L. Arnold, {\it Random Dynamical Systems}, Springer-Verlag,
(1998).

\bibitem{Fayad-Khanin} B. Fayad and K. Khanin, {\it Smooth
  linearization of commuting circle diffeomorphisms}, Annals of
  Math. \textbf{170} n. 2, (2009), 961-980.

\bibitem{Gill and O'Farrel} N. Gill, A. G. O'Farrel, and I. Short,
  {\it Reversibility in the group of homeomorphisms of the circle},
  Bull. London Math.  Soc. \textbf{41} (2009), 885-897.

\bibitem{GuM13} P. Guarino, and W. de Melo, {\it Rigidity of smooth
    critical circle maps}, Pre-print: {\it ArXiv:1303.3470v1} (2013).

\bibitem{Higgins} J. R. Higgins, {\it Sampling Theory in Fourier and Signal
Analysis. Foundations}, Oxford Science Publications, Clarendon Press, (1996).

\bibitem{Katok and Hasselblatt} A. Katok and B. Hasselblatt, {\it
    Introduction to the modern theory of dynamical systems},
  Encyclopedia of Mathematics and its Applications, \textbf{54},
  Cambridge University Press, (1995).

\bibitem{Kusuoka and Stroock} S. Kusuoka and D. Stroock,
  {\it Applications of Malliavin Calculus, Part III},
    J. Fac. Sci. Univ. Tokyo Sect. IA Math. \textbf{34} (1987), n. 2,
    391-442.

  \bibitem{Kusuoka and Stroock-1988} S. Kusuoka and D. Stroock, {\it
      Long time estimates for the heat kernel associated with a
      uniformly subelliptic symmetric second order operator}, Annals
    of Math, \textbf{127}, n. 1 (1988) 165-189.

  \bibitem{Li and Lu} W. Li and K. Lu, {\it Rotation numbers for
      random dynamical systems on the circle}, Trans. of AMS.,
    \textbf{360}, n. 10, (2008), 5509-5528.

  \bibitem{McSharry and Ruffino} P. E. McSharry and P. R.  Ruffino,
    {\it Asymptotic angular stability in non- linear systems: rotation
      numbers and winding numbers}, Dynamical Systems, \textbf{18},
    n. 3 (2003) 191-200.


  \bibitem{Oppenheim and Schafer} A. V. Oppenheim and R. W. Schafer,
    {\it Discrete-time signal processing}. Prentice--Hall, (1989).



  \bibitem{Ruffino_SSR} P. R. C. Ruffino, {\it Rotation number for
      stochastic dynamical systems}, Stoch. Stoch. Reports,
    \textbf{60}, (1997), 289-318.


  \bibitem{Ruffino_ROSE} P. R. C. Ruffino, {\it A sampling theorem A
    sampling theorem for rotation numbers of linear processes in
    $\R^{2}$}, Random Oper. Stoch. Equat., \textbf{8}, n. 2, (2000).


 \bibitem{Zmarrou and Homburg-2007} H. Zmarrou and A. J. Homburg, {\it
    Bifurcations of stationary measures of random
    diffeomorphisms} Ergodic Theory and Dynam. Systems, \textbf{27} (2007), n.  5,
    1651-1692.

  \bibitem{Zmarrou and Homburg-2008} H. Zmarrou and A. J. Homburg,
    {\it Dynamics and bifurcations of random circle diffeomorphisms}
    Discrete Contin. Dyn. Syst. Serie B \textbf{10} (2008), n. 2-3,
    719-731.

\end{thebibliography}



\bibliographystyle{amsalpha}

\end{document}